\documentclass[reqno]{amsart}
\usepackage{amsmath,amssymb,amsfonts,caption}
\usepackage{graphicx}
\usepackage{euscript}
\usepackage{enumerate}
\usepackage{verbatim}
\usepackage{epsfig}

\newtheorem{theorem}{Theorem}
\newtheorem{lemma}{Lemma}
\newtheorem{proposition}{Proposition}


\renewcommand{\epsilon}{\varepsilon}
\renewcommand{\phi}{\varphi}
\renewcommand{\le}{\leqslant}
\renewcommand{\ge}{\geqslant}
\newcommand{\eps}{\varepsilon}
\newcommand{\ds}{\, ds}

\newcommand{\du}{\, du}
\newcommand{\dtau}{\, d\tau}

\newcommand{\dint}{\displaystyle \int}
\newcommand{\per}{PER}
\newcommand{\aut}{AUT}

\DeclareMathOperator{\e}{e}

\usepackage{dsfont}
   \newcommand{\N}{\ensuremath{\mathds N}}
   \newcommand{\R}{\ensuremath{\mathds R}}

\begin{document}
\title[]
   {A nonautonomous epidemic model with general incidence and isolation}
\author{C\'esar M. Silva}
\address{C. Silva\\
   Departamento de Matem\'atica\\
   Universidade da Beira Interior\\
   6201-001 Covilh\~a\\
   Portugal}
\email{csilva@ubi.pt}
\date{\today}
\thanks{Work partially supported by FCT thought CMUBI (project PEst-OE/MAT/UI0212/2011)}
\subjclass[2010]{92D30, 37B55, 34D20} \keywords{Epidemic model, isolation,
non-autonomous, stability}
\begin{abstract}
We obtain conditions for eradication and permanence of infection for a nonautonomous SIQR model with time-dependent parameters, that are not assumed to be periodic. The incidence is given by functions of all compartments and the threshold conditions are given by some numbers that play the role of the basic reproduction number. We obtain simple threshold conditions in the autonomous, asymptotically autonomous and periodic settings and show that our thresholds coincide with the ones already established. Additionally, we obtain threshold conditions for the general nonautonomous model with mass-action, standard and quarantine-adjusted incidence.
\end{abstract}
\maketitle
\section{Introduction}
Isolation and quarantine are two possible measures for reducing the transmission of diseases and have been used over the centuries to reduce the transmission of human diseases such as leprosy, plague, typhus, cholera, yellow fever and tuberculosis.

The concept of quarantine dates back to the fourteenth century and is related to plague. In fact, in 1377, the Rector of the seaport of the old town of Ragusa (modern Dubrovnik) officially issued a thirty-day isolation period: ships coming from infected or suspected to be infected sites were to stay at anchor for thirty days before docking. A similar measure was adopted for land travelers but the period of time was enlarged to forty days. The word quarantine comes from the Italian word ``quarantena'', meaning forty-day period.

In the context of epidemic models, the term quarantine is used to describe the deliberate separation of individuals suspected of being exposed to a disease, from a population of susceptible individuals. Progression to a symptomatic (infectious) stage results in isolation, that is, in the strict separation of an individual from the members of the population at risk. Isolation is still an effective way of controlling infectious diseases. For some milder infectious diseases, quarantined individuals could be individuals that choose to stay home from school or work while for more severe infectious diseases people may be forced into isolation.

To study the effect of isolation in the spread of a disease, we consider a SIQR model. Thus, we divide the population into four compartments: the $S$ compartment corresponding to uninfected individuals that are susceptible to the disease, the $I$ compartment corresponding to individuals that are infected and not isolated, the $Q$ compartment corresponding to isolated individuals ($Q$ stands for quarantine) and the $R$ compartment including the recovered and immune individuals. We assume in this paper that the isolated individuals are perfectly separated from the others so that they do not infect the susceptibles and that the infection confers permanent immunity upon recovery. Measles, mumps and rubella are examples of diseases that confer lifelong immunity.

A major concern when studying epidemic models is to understand the asymptotic behavior of the compartments considered, particularly the infective compartment. Frequently, in autonomous epidemic models, one of two situations occur: there is a disease-free equilibrium that is the unique equilibrium of the system or the disease-free equilibrium coexists with an endemic equilibrium. The basic reproduction number is a fundamental tool to obtain the behavior for a given set of parameters: if the basic reproductive number is less than one the disease-free equilibrium is locally asymptotically stable and if this number exceeds one the disease remains in the population.

Several authors studied autonomous epidemic models with a quarantine class.
Namely, Feng and Thieme~\cite{Feng-Thieme-MB-1995} proposed a SIQR model for childhood diseases and showed that isolation can be responsible for the existence of self-sustained oscillations. Their model includes a modified incidence function - the quarantine-adjusted incidence - where the number of per capita contacts between susceptibles and infectives is divided by the number of non isolated individuals. Some dynamical aspects of this SIQR model were studied by Wu and Feng~\cite{Wu-Feng-JDE-2000}. Namely, these authors studied this model through unfolding analysis of a normal form derived from the model and found Hopf and homoclinic bifurcations associated to this unfolding. A generalized version of Feng and Thieme's model, allowing infected individuals to recover without passing through quarantine and also disease related deaths, was proposed by Hethcote, Zhien and Shengbing in~\cite{Hethcote-Zhien-Shengbing-MB-2002}, where this model was compared to SIQR models with different incidences. A similar analysis is also undertaken in that paper for SIQS models with different incidences. The authors concluded in that work that the SIQR model with quarantine-adjusted incidence seems consistent with the sustained oscillations that are observed in real disease incidences but do not match every aspect of the observed data.

Other types of epidemic models with quarantine were studied in the literature. A SEIQR model was considered by Gerberry and Milner in~\cite{Gerberry-Milner-JMB-2009} where the authors studied the dynamics and used historical data to discuss the adequacy of the model in the context of childhood diseases. The global dynamic behavior of a different SEIQS epidemic model, with a nonlinear incidence of the form $\beta SI$, was analyzed by Yi, Zhao and Zhang in~\cite{Yi-Zhao-Zhang-IJISS-2009}. In another direction, Arino, Jordan and van den Driessche~\cite{Arino-Jordan-Driessche-MB-2007} studied the effect of quarantine in a general model of a disease that can be transmitted between different species and multiple patches (though in their setting quarantine is present only in the form of travel restriction between patches). A model with twelve different classes was proposed by Safi and Gumel~\cite{Safi-Gumel-CSF-2007} to study the impact of quarantine of
latent cases, isolation of symptomatic cases and of an imperfect vaccine in the control of the spread of an infectious disease in the population. The same authors studied a delayed model including a quarantined class~\cite{Safi-Gumel-NA-2011}.

We emphasize that all the above models are autonomous models. However, it is well-known that fluctuations are very common in disease transmission. Many diseases show seasonal behaviour, with some periods of the year being more propitious for transmission than others. For instance, the opening and closing of schools tend to make the contact rates vary seasonally in childhood diseases~\cite{Dietz-LNB-1976}. For example, the study of weekly measles case reports from England and Wales demonstrated a decline in the transmission parameter during school holidays~\cite{Fine-Clarkson-IJE-1982}.

It has been demonstrated that nonautonomous behavior, in particular periodic or almost periodic behavior, is present in contact rates corresponding to diseases like the flu~\cite{Earn-Dushoff-Levin-TEE-2020}, measles, chickenpox and mumps~\cite{London-Yorke-AJE-1973}, mucormycosis~\cite{Ajam-Bizri-Mokhbat-Weedon-Lutwick-EI-2006}, leishmaniasis~\cite{Bacaer-Guernaoui-JMB-2006} and dengue~\cite{Coutinho-Burattini-Lopez-Massad}. Thus, in the study of seasonal infections, it is natural to consider contact rates given by periodic functions. A common contact rate can have the form $\beta(t)=\alpha(1+\sigma\cos(2\pi t))$, where $\sigma$ is the amplitude of seasonal variation in transmission, usually called ``strength of seasonal forcing''~\cite{Grassly-Fraser-PRSB-2006}.
Additionally, it is also a fact that environment and demographics change with time, sometimes in a aperiodic way. It is therefore natural to consider time-dependent parameters in epidemic models and define threshold conditions for extinction and persistence of the disease in the nonautonomous case.

Our objective in this paper is to consider a family of nonautonomous SIQR models with a large family of contact rates given by general functions that are, in general, time-dependent and far from bilinear, and obtain numbers that play the role of the basic reproductive number in this setting. Related results for nonautonomous SIRVS models were obtained in~\cite{Pereira-Silva-Silva, Zhang-Teng-Gao-AA-2008}, though the incidences considered in those papers are linear in the infectives while here we assume that the incidences are, in general, nonlinear functions of all compartments. We not that, for some diseases with long latent stages, a latent class must be considered. Thus our model is only suitable for diseases with zero or negligible latent period.

Note that in~\cite{Rebelo-Margheri-Bacaer-JMB-2012, Wang-Zhao-JDDE-2008} some methods were developed to obtain thresholds for general periodic models. These methods apply to our setting when we assume the coefficients to be periodic. We emphasise that this is not necessarily our case, since our coefficients need not be periodic. Additionally, our thresholds are given by explicit formulas (see section~\ref{section:SR}).

The structure of the paper is as follows: in section~\ref{section:NP} we introduce the model and the main definitions; in section~\ref{section:SR} we state our main Theorem; in the next sections we look at some particular cases of our model, namely, in section~\ref{section:RNAS} we particularize for the autonomous model, in section~\ref{section:NSIQRSIR} we see that the asymptotically autonomous model has the same thresholds as the limiting autonomous model, in section~\ref{section:PER} we obtain thresholds for the periodic model with constant death and recruitment rates, in section~\ref{section:GEN} we consider the general nonautonomous model with mass-action and constant birth and death rates, in section~\ref{section:GENSQA} we consider the general nonautonomous model with standard and quarantine-adjusted incidences; in section~\ref{section:PT} we prove some auxiliary results and the main theorem; in section~\ref{section:SIM} we present some simulations that illustrate our results; finally in section~\ref{section:D} we summarize our results and consider some possible future work.
\section{Generalized nonautonomous SIQR model} \label{section:NP}
We consider the following nonautonomous and general incidence SIQR model that generalizes the three autonomous SIQR models in~\cite{Hethcote-Zhien-Shengbing-MB-2002},
\begin{equation}\label{eq:ProblemaPrincipal}
\begin{cases}
S'=\Lambda(t)-\phi(t,S,R,Q,I)-d(t) S \\
I'=\phi(t,S,R,Q,I)-\left[\gamma(t)+\sigma(t)+d(t)+\alpha_1(t) \right]I \\
Q'=\sigma(t)I -[d(t)+\alpha_2(t)+\eps(t)]Q \\
R'=\gamma(t) I + \eps(t) Q -d(t) R
\end{cases},
\end{equation}
where $S$, $R$, $Q$ and $I$ correspond respectively to the susceptible, recovered, quarantine and infective compartments; $\Lambda(t)$ is the recruitment of susceptible (births and immigration); $\phi(t,S,R,Q,I)$ is the incidence (into the infective class); $d(t)$ is the per capita natural mortality rate; $\gamma(t)$ is the recovery rate; $\sigma(t)$ is the removal rate from the infectives; $\alpha_1(t)$ is the disease related death in the infectives; $\alpha_2(t)$ is the disease related death in the quarantine class; and $\epsilon(t)$ is the removal rate from the quarantine class. We will assume that $\phi,\Lambda, d, \gamma, \sigma, \alpha_1, \alpha_2$ and $\epsilon$ are continuous bounded and nonnegative functions on $\R_0^+$ and that there are $\omega_d,\omega_\Lambda >0$ such that
   \begin{equation}\label{eq:d-Lambda-}
   d_{\omega_d}^-> 0 \quad \text{and} \quad \Lambda_{\omega_\Lambda}^- > 0,
   \end{equation}
where we are using the notation
\begin{equation}\label{eq:asymptotic-parameters}
  \quad h_{\omega_h}^-=\liminf_{t \to +\infty} \int_t^{t + \omega_h} h(s) \ds \quad \text{and} \quad h_{\omega_h}^+=\limsup_{t \to +\infty} \int_t^{t+\omega_h} h(s) \ds,
\end{equation}
that we will keep on using throughout the paper. For bounded $h$ we will also use the notation
$$h_S=\sup_{t \ge 0} h(t) \quad \text{ and } \quad h_I=\inf_{t \ge 0} (t).$$

Next we state some simple facts about problem~\eqref{eq:ProblemaPrincipal}.
\begin{proposition}\label{subsection:BS}
We have the following:
\begin{enumerate}[i)]
\item \label{cond-1-bs} all solutions $(S(t),I(t),Q(t),R(t))$ of~\eqref{eq:ProblemaPrincipal}
with $S(t_0)\ge 0$, $I(t_0)\ge 0$, $Q(t_0)\ge 0$ and $R(t_0))\ge 0$ verify $S(t)\ge 0$, $I(t)\ge 0$, $Q(t)\ge 0$ and $R(t))\ge 0$ for all $t \ge t_0$;
\item \label{cond-2-bs} all solutions $(S(t),I(t),Q(t),R(t))$ of~\eqref{eq:ProblemaPrincipal}
with $S(t_0)> 0$, $I(t_0)> 0$, $Q(t_0)> 0$ and $R(t_0))> 0$ verify $S(t)> 0$, $I(t)> 0$, $Q(t)> 0$ and $R(t))> 0$ for all $t \ge t_0$;
\item \label{cond-3-bs}  If $(S(t),I(t),Q(t),R(t))$ is a solution
of~\eqref{eq:ProblemaPrincipal} with nonnegative initial conditions then there is a constant $K>0$ such that
\[
\limsup_{t \to +\infty} \, ( S(t)+I(t)+Q(t)+R(t) ) \le K.
\]
\item \label{cond-4-bs} There are constants $C,T>0$ such that, if $I(t)<\beta$ for all $t>T$ then
\begin{equation}\label{eq:boundQR}
Q(t)<C\beta \quad \text{ and } \quad R(t)<C\beta
\end{equation}
for all $t$ sufficiently large.
\end{enumerate}
\end{proposition}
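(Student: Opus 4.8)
The plan is to dispatch the first two assertions by elementary differential inequalities and the last two by a comparison argument resting on the hypothesis $d_{\omega_d}^->0$ from \eqref{eq:d-Lambda-}.

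For the first and second assertions: on any compact subinterval of the maximal interval of existence every component is bounded, hence $\phi$ is bounded there, and by the standing assumptions on the incidence $\phi$ (in particular that $\phi$ vanishes when $S=0$, together with local Lipschitz dependence on the spatial variables) one gets $\phi(t,S,R,Q,I)\le L\,S$ on that subinterval. Therefore $S'\ge-(L+d_S)S$; since $\phi\ge 0$ one has $I'\ge-(\gamma+\sigma+d+\alpha_1)_S\,I$; using $I\ge 0$ one then has $Q'\ge-(d+\alpha_2+\eps)_S\,Q$; and using $I,Q\ge 0$, $R'\ge-d_S\,R$. A Gr\"onwall estimate on each line gives $S(t)\ge S(t_0)\,\e^{-(L+d_S)(t-t_0)}$ and analogous exponential lower bounds for $I,Q,R$; these are nonnegative when the data are nonnegative and strictly positive when the data are strictly positive. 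Since these bounds also preclude finite-time blow-up, combining with the third assertion shows all such solutions are global.

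For the third assertion, let $N=S+I+Q+R$. Once nonnegativity is known, $N'=\Lambda(t)-d(t)N-\alpha_1(t)I-\alpha_2(t)Q\le\Lambda_S-d(t)N$. The auxiliary fact I would isolate first is that $d_{\omega_d}^->0$ produces constants $\kappa\ge1$, $\lambda>0$ and $T_0\ge 0$ with $\exp\!\big(-\int_s^t d(r)\dr\big)\le\kappa\,\e^{-\lambda(t-s)}$ for all $t\ge s\ge T_0$: choose $c>0$ with $\int_u^{u+\omega_d}d\ge c$ for $u\ge T_0$, split $[s,t]$ into consecutive windows of length $\omega_d$, and bound $\int_s^t d\ge(\tfrac{t-s}{\omega_d}-1)c$. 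A one-sided comparison (variation of constants applied to $N'\le\Lambda_S-d(t)N$) then yields $N(t)\le N(T_0)\kappa\,\e^{-\lambda(t-T_0)}+\Lambda_S\kappa/\lambda$ for $t\ge T_0$, so $\limsup_{t\to+\infty}N(t)\le K:=\Lambda_S\kappa/\lambda$.

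For the fourth assertion, take $T:=T_0$. If $I(t)<\beta$ for all $t>T$, then $Q'\le\sigma_S\beta-d(t)Q$ on $(T,+\infty)$, so the same comparison gives $\limsup_t Q(t)\le(\sigma_S\kappa/\lambda)\beta$, hence $Q(t)<C_1\beta$ with $C_1:=2\sigma_S\kappa/\lambda$ for all large $t$; substituting into $R'=\gamma I+\eps Q-dR\le(\gamma_S+\eps_S C_1)\beta-d(t)R$ and repeating, $R(t)<C_2\beta$ with $C_2:=2(\gamma_S+\eps_S C_1)\kappa/\lambda$ for all large $t$. Then $C:=\max\{C_1,C_2\}$ and $T=T_0$ give \eqref{eq:boundQR}. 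I expect the only real subtlety to be keeping $C$ and $T$ independent of $\beta$ and of the particular solution, which is precisely why the moving-window hypothesis $d_{\omega_d}^->0$ is used rather than mere pointwise positivity of $d$: it delivers the exponential decay estimate for $\exp(-\int_s^t d)$ uniformly in the base point $s\ge T_0$, so the constants come out of one and the same estimate for every $\beta$.
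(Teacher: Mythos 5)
Your proof is correct and takes essentially the same route as the paper: one-sided differential inequalities with Gr\"onwall-type exponential lower bounds for nonnegativity/positivity, and comparison via variation of constants for $N=S+I+Q+R$, $Q$ and $R$, where the uniform decay $\exp\!\left(-\int_s^t d(r)\dr\right)\le \kappa\,\e^{-\lambda(t-s)}$ extracted from $d_{\omega_d}^->0$ is exactly the device the paper uses (with $C_0=d_{\omega_d}^-/(2\omega_d)$) in its proof of part~\ref{cond-4-bs}), and your constants are in fact stated more carefully than the paper's. The only slight liberty is the bound $\phi\le L\,S$ near $S=0$, which invokes a Lipschitz property in the first variable at $S=0$ that H\ref{cond-H2a}) literally grants only on $\Delta_{\theta,K}$ with $\theta>0$; this is harmless, since such local regularity is implicitly assumed for well-posedness and the paper itself offers no more detail for parts~\ref{cond-1-bs})--\ref{cond-2-bs}).
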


\begin{proof}
Properties~\ref{cond-1-bs}),~\ref{cond-2-bs}) are a simple consequence of the nonnegativeness and the boundedness of the parameter functions. Using~\eqref{eq:d-Lambda-}, property~\ref{cond-3-bs}) follows easily.

Assuming that $I(t)<\beta$ for all $t \ge \widetilde{T}$, by the third equation in~\eqref{eq:ProblemaPrincipal}, we get
   $$Q'(t) < \beta \sigma(t) - [d(t)+\alpha_2(t)+\eps(t)]Q(t)$$
and thus, by~\eqref{eq:d-Lambda-}, for some $T>\widetilde{T}$ sufficiently large and $C_0=\frac{d_{\omega_d}^-}{2 \omega_d}$, we have
   \[
   \begin{split}
   Q(t)
   & = \e^{-\int_{T}^t d(s)+\alpha_2(s)+\eps(s) \ds} Q(T) + \int_{T}^t \e^{-\int_u^t d(s)+\alpha_2(s)+\eps(s) \ds} \beta \sigma(u) \du \\
   & \le \e^{-C_0 (t-T-\omega_d)} Q(T) + \beta \sigma_S \int_{T}^t \e^{-C_0(t-u-\omega_d)} \du \\
   & \le \e^{-C_0 (t-T-\omega_d)} K + \frac{\beta \sigma_S \e^{C_0\omega_d}}{C_0}\left(1-\e^{-C_0(t-T)}\right) \\
   \end{split}
   \]
and thus, taking $C_1=\sigma_S \e^{C_0\omega_d}/(2C_0)$, we have $Q(t)\le C_1 \beta$ for $t$ sufficiently large. A similar argument applied to the third equation in~\eqref{eq:ProblemaPrincipal} shows that there is a constant $C_2>0$ (independent of $\beta$) such that $R(t) \le C_2 \beta$ for $t$ sufficiently large. Choosing $C=\max\{C_1,C_2\}$ we obtain property~\ref{cond-4-bs}).
\end{proof}

Consider the auxiliary equation
\begin{equation}\label{eq:SistemaAuxiliar}
x'=\Lambda(t)-d(t)x,
\end{equation}
where $\Lambda$ and $d$ are the birth and death rate in~\eqref{eq:ProblemaPrincipal} (that we have assumed to be continuous bounded and nonnegative functions on $\R_0^+$ verifying $d_{\omega_d}^-> 0$ and $\Lambda_{\omega_\Lambda}^- > 0$ for some $\omega_d,\omega_\Lambda >0$). Equation~\eqref{eq:SistemaAuxiliar} corresponds to the differential equation obtained by adding the equations in~\eqref{eq:ProblemaPrincipal} and thus solutions of equation~\eqref{eq:SistemaAuxiliar} correspond to the evolution of the total population. The solutions of this equation will be important to determine threshold conditions for the permanence and extinction of the disease. In the next proposition some properties of the solutions of equation~\eqref{eq:SistemaAuxiliar} are established.

\begin{proposition} \label{eq:subsectionAS}
We have the following:
\begin{enumerate}[i)]
\item \label{cond-1-aux}
Given $t_0 \ge 0$, all solutions $x(t)$ of equation~\eqref{eq:SistemaAuxiliar} with initial condition $x(t_0) \ge 0$ are nonnegative for all $t \ge 0$;
\item \label{cond-1a-aux}
Given $t_0 \ge 0$, all solutions $x(t)$ of equation~\eqref{eq:SistemaAuxiliar} with initial condition $x(t_0) > 0$ are positive for all $t \ge 0$;
\item \label{cond-1b-aux}
Given $t_0 \ge 0$ and a solution $x(t)$ of equation~\eqref{eq:SistemaAuxiliar} with initial condition $x(t_0) > 0$, there are $M>0$ and $t_1 \ge 0$ such that $x(t)\ge M$ for all $t \ge t_1$;
\item \label{cond-2-aux} Each fixed solution $x(t)$ of~\eqref{eq:SistemaAuxiliar} with initial condition $x(t_0) \ge 0$ is bounded and globally uniformly attractive on $[0,+\infty)$;
\item \label{cond-3-aux} There is a constant $D>0$ such that, if $x(t)$ is a solution of~\eqref{eq:SistemaAuxiliar} and $\tilde x(t)$ is a solution of the system
\begin{equation}\label{eq:sist-aux-f}
x'=\Lambda(t) - d(t) x + f(t)
\end{equation}
with $\tilde x(t_0)=x(t_0)$ then
\[
\sup_{t \ge t_0} |\tilde x(t) - x(t)| \le D \ \sup_{t \ge t_0} |f(t)|.
\]

\begin{proof}
For $t_0 \ge 0$, the solution of~\eqref{eq:SistemaAuxiliar} with initial condition $x(t_0)=x_0>0$ is given by
\begin{equation}\label{eq:sol-aux}
   x(t)=\e^{-\int_{t_0}^t d(s) \ds} x_0 +\dint_{t_0}^t \e^{-\int_u^t
   d(s)\ds} \Lambda(u) \du.
\end{equation}
Since $t \mapsto \Lambda(t)$ is nonnegative, equation~\eqref{eq:sol-aux} immediately implies~\ref{cond-1-aux}) and~\ref{cond-1a-aux}).

Since $t \mapsto d(t)$ is bounded, by and~\eqref{eq:sol-aux} and~\eqref{eq:d-Lambda-} we obtain
   \[
   \begin{split}
   x(t)
   & = \e^{-\int_{t-\omega_\Lambda}^t d(s) \ds} x(t-\omega_\Lambda) +\dint_{t-\omega_\Lambda}^t
   \e^{-\int_u^t d(s)\ds} \Lambda(u) \du\\
   &>\e^{-d_S\omega_\Lambda}\dint_{t-\omega_\Lambda}^t \Lambda(u)\du\\
   & \ge \e^{-d_S \omega_\Lambda} \Lambda_{\omega_\Lambda}^-/2,
   \end{split}
   \]
for all $t$ sufficiently large. Thus we have~\ref{cond-1b-aux}).

By~\eqref{eq:d-Lambda-}, since $t \mapsto \Lambda(t)$ is bounded, we get for all $t$ sufficiently large
   \[
   \begin{split}
   x(t)
   & = \e^{-\int_{t_0}^t d(s) \ds} x_0 +\dint_{t_0}^{t}
   \e^{-\int_u^t d(s)\ds} \Lambda(u) \du\\
   & < x_0  + \Lambda_S \dint_{t_0}^t \exp\left(-\frac{t-u}{\omega_d}-1\right)\frac{d^-}{2} \du \\
   & < x_0+\frac{2\omega_d}{d^-} \e^{d^-/2} \Lambda_S,
   \end{split}
   \]
and we conclude that each solution $x(t)$ of~\eqref{eq:SistemaAuxiliar} with initial condition $x(t_0) \ge 0$ is bounded. Additionally, if $x,x_1$ are two solutions of~\eqref{eq:SistemaAuxiliar} with initial condition $x_0=x(t_0) \ge 0$ and $x_{1,0}=x_1(t_0) \ge 0$ we have
   $$|x(t)-x_1(t)| \le \e^{-\int_{t_0}^t d(s) \ds} | x_0 - x_{1,0}|,$$
and we get~\ref{cond-2-aux}).

Finally, if $x(t)$ is a solution of~\eqref{eq:SistemaAuxiliar} and $\tilde x(t)$ is a solution of~\eqref{eq:sist-aux-f} with $\tilde x(t_0)=x(t_0)$ then, setting $u(t)=\tilde x(t)-x(t)$, we obtain the problem
   $$u'=-d(t)u+f(t),$$
with $u(t_0)=0$. The above problem has solution
   \[
\mathbb{}   \begin{split}
   u(t)
   & = \dint_{t_0}^t \e^{-\int_u^t d(s)\ds} f(u) \du \\
   & \le \sup_{t \ge t_0} |f(t)| \dint_{t_0}^t \e^{-\int_u^t d(s)\ds} \du \\
   & \le D \sup_{t \ge t_0} |f(t)|
   \end{split}
   \]
where $D=\frac{2\omega_d}{d^-} \exp\left(\frac{d^-}{2}\right)$. Thus, we obtain~\ref{cond-3-aux}).
\end{proof}

\end{enumerate}
\end{proposition}

For each $\theta,\delta \in \R_0^+$ with $\delta > \theta$ define the set
$$\Delta_{\theta,\delta}=\{(x_1,x_2,x_3,x_4) \in \R^4 \colon \ \theta \le x_1 \le \delta \ \wedge \ 0 \le x_i \le \delta, \ i=2,3,4 \}.$$
We note that every solution $(S(t),I(t),Q(t),R(t))$ of our system stays in the region $\Delta_{0,K}$ (where $K$ is givel by~\ref{cond-3-bs}) in Proposition~\ref{subsection:BS}) for every $t \in \R_0^+$ sufficiently large.

We need some additional assumptions about our system.
Assume that:
\begin{enumerate}[$\text{H}$1)]
\item \label{cond-H1} the functions $x \mapsto \phi(t,x,y,w,z)$ are non decreasing and $\phi(t,0,y,w,z) = 0$;
\item \label{cond-H2a} given $\theta>0$ there is $K_\theta>0$ such that $$|\phi(t,x_1,y,w,z) - \phi(t,x_2,y,w,z)| \le K_\theta |x_1-x_2|z,$$
for each $(t,x_1,y,w,z),(t,x_2,y,w,z)\in \R_0^+ \times \Delta_{\theta,K}$;
\item \label{cond-H2b} There is $N>0$ such that, for every $0<\delta \le K$, \, $y,w,z \in (0,\delta)$ and $t \in \R_0^+$ we have
$$\inf_{0 < \tau < 3\delta} \frac{\phi(t,x,0,0,\tau)}{\tau}
\le \dfrac{\phi(t,x,y,w,z)}{z} < N. $$
\item \label{cond-H4} For every $y,w,z \in (0,K)$ we have
$$\dfrac{\phi(t,x,y,w,z)}{z} \le \limsup_{\delta \to \, 0} \frac{\phi(t,x,0,0,\delta)}{\delta} < +\infty. $$
\end{enumerate}
Note that for contact rates that are not dependent on $y$ and $w$,   H\ref{cond-H2b}) is always verified. Note also that, for differentiable incidence $\phi$, H\ref{cond-H2a}) holds if and only if for each $0 < \theta \le K$ there is $K_\theta>0$ such that
   $$\dfrac{\partial \phi}{\partial x} (t,x,y,w,z) \le K_\theta z,$$
for all $(t,x,y,w,z)\in \R_0^+ \times \Delta_{\delta,\theta}$.

Note that the above conditions hold for the usual incidence functions. Note further that condition~H\ref{cond-H1}) is very natural since the incidence shouldn't decrease with the increase of susceptibles and must be $0$ in the absence of susceptibles. Setting $x_1=x$, $x_2=0$ and $z=0$ in~H\ref{cond-H2a}) we conclude that we must have $\phi(t,x,y,w,0)=0$, another reasonable assumption that states that in the absence of infectives the incidence vanishes.

Conditions~H\ref{cond-H2a}), H\ref{cond-H2b}) and~H\ref{cond-H4}) are not as natural as condition~H\ref{cond-H1}) but somehow we can see them as imposing bounds for the force of infection. Nevertheless, they resemble similar conditions assumed by several authors that studied autonomous epidemic models with general incidence functions. For instance, in~\cite{Driessche-Li-Muldowney-CAMQ-1999}, the authors study a family of autonomous SEIRS models with general incidence functions of the form $\beta g(I) S$ where $g$ is a $C^1$ function verifying $g(0)=0$,
\begin{equation}\label{eq:DLM-CAMQ-1999-cond1}
\lim_{I \to 0^+} \frac{\beta g(I) S}{I} < +\infty
\end{equation}
and
\begin{equation}\label{eq:DLM-CAMQ-1999-cond2}
g(I) \le c I,
\end{equation}
for sufficiently small $I$. Condition~\eqref{eq:DLM-CAMQ-1999-cond1} resembles our condition~H\ref{cond-H4}) and condition~\eqref{eq:DLM-CAMQ-1999-cond2} implies
$$|\beta g(I) S_1 - \beta g(I) S_2| \le \beta g(I) |S_1-S_2| \le \beta c I |S_1-S_2|,$$
corresponding in their situation to our condition~H\ref{cond-H4}).

Now, we need to make some definitions. We say that
\begin{enumerate}[i)]
\item the infectives $I$ are \emph{permanent} in system~\eqref{eq:ProblemaPrincipal} if there exist $0<m_1<m_2$ such that
\[
m_1 < \liminf_{t \to \infty} I(t) \le \limsup_{t \to \infty} I(t) < m_2,
\]
for every solution $(S(t),I(t),R(t),V(t))$ of~\eqref{eq:ProblemaPrincipal} with $S_0(t),I_0(t),R_0(t),V_0(t)>0$ (note that we require that $m_1$ and $m_2$ are independent of the given solution with positive initial condition in $I$);
\item the infectives $I$ go to \emph{extinction} in system~\eqref{eq:ProblemaPrincipal} if $\displaystyle \lim_{t \to \infty} I(t) = 0$ for all solutions of~\eqref{eq:ProblemaPrincipal}.
\end{enumerate}

\section{Statment of the results} \label{section:SR}

For each solution $x(t)$ of~\eqref{eq:SistemaAuxiliar} with $x(0)>0$, define the function
\[
b_\delta(t,x(t))=
\frac{\phi(t,x(t),0,0,\delta)}{\delta} -\left( \gamma(t)+\sigma(t)+d(t)+\alpha_1(t)\right)
\]
and the numbers
\begin{equation}\label{eq:liminf-threshold}
  r_p(\lambda) = \liminf_{t \to \, +\infty}\int_t^{t+\lambda} \liminf_{\delta \to 0} b_\delta(s,x(s)) \ds,
\end{equation}
\begin{equation}\label{eq:limsup-threshold}
  r_e(\lambda) = \limsup_{t \to \, +\infty}\int_t^{t+\lambda}
\limsup_{\delta \to 0} b_\delta(s,x(s)) \ds,
\end{equation}

$$R_p(\lambda)=\e^{r_p(\lambda)} \quad \text{and} \quad R_e(\lambda)=\e^{r_e(\lambda)}.$$
Contrarily to what one could expect, the next technical lemma that will be proved in Section~\ref{section:PT} shows that the numbers above do not depend on the particular solution $x(t)$ of~\eqref{eq:SistemaAuxiliar} with $x(0)>0$.
The numbers $R_p(\lambda)$, $R_e(\lambda)$ play the role of the basic reproduction number (defined for autonomous systems and also called quarantine number) in the non-autonomous setting.

We have the following result whose proof will be given in section~\ref{section:PT}.

\begin{lemma} \label{lema:indep}
We have the following:
\begin{enumerate}
\item Let $\eps > 0$ be sufficiently small and $0 < \theta \le K$. If
$$a,b \in \ (\theta,K) \quad \text{and} \quad |a-b|<\eps,$$
then
\begin{equation} \label{eq:bdela}
|b_\delta(t,a) - b_\delta(t,b)| < \eps K_\theta.
\end{equation}
\item The numbers $R_p(\lambda)$ and $R_e(\lambda)$ are independent of the particular solution $x(t)$ with $x(0)>0$ of~\eqref{eq:SistemaAuxiliar}.
\end{enumerate}
\end{lemma}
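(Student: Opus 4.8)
The plan is to deduce the second (and main) statement from the first, which is itself an immediate consequence of hypothesis H\ref{cond-H2a}). Since the term $\gamma(t)+\sigma(t)+d(t)+\alpha_1(t)$ subtracted in the definition of $b_\delta(t,\cdot)$ does not depend on the first state variable, for any $a,b$ one has
\[
b_\delta(t,a)-b_\delta(t,b)=\frac{\phi(t,a,0,0,\delta)-\phi(t,b,0,0,\delta)}{\delta}.
\]
When $a,b\in\ ]\theta,K[$ the points $(t,a,0,0,\delta)$ and $(t,b,0,0,\delta)$ lie in $\R_0^+\times\Delta_{\theta,K}$ (for $\delta\in\ ]0,K]$), so H\ref{cond-H2a}) gives $|\phi(t,a,0,0,\delta)-\phi(t,b,0,0,\delta)|\le K_\theta|a-b|\,\delta$; dividing by $\delta$ yields $|b_\delta(t,a)-b_\delta(t,b)|\le K_\theta|a-b|$, which gives~\eqref{eq:bdela} as soon as $|a-b|<\eps$. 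The point I would stress here is that $K_\theta$ depends only on $\theta$ and not on $\delta$; this uniformity is what lets the bound survive the passage $\delta\to0$ in the next step.

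For the second statement, fix $\lambda>0$ and two solutions $x_1(t),x_2(t)$ of~\eqref{eq:SistemaAuxiliar} with $x_1(0),x_2(0)>0$. By Proposition~\ref{eq:subsectionAS}, item~\ref{cond-1b-aux}), there are $M>0$ and $t_1\ge0$ with $x_1(t)\ge M$ for $t\ge t_1$; by item~\ref{cond-2-aux}) (more precisely, the estimate $|x_1(t)-x_2(t)|\le\e^{-\int_{t_0}^t d(s)\ds}|x_1(t_0)-x_2(t_0)|$ from its proof, together with~\eqref{eq:d-Lambda-}, which forces $\int_{t_0}^t d(s)\ds\to+\infty$) we have $|x_1(t)-x_2(t)|\to0$; and both solutions are bounded. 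Hence there are $\theta\in\ ]0,M/2]$ and $T_0\ge0$ with $x_1(t),x_2(t)\in\ ]\theta,K[$ for all $t\ge T_0$. Fix $\eps>0$ and choose $T_\eps\ge T_0$ with $|x_1(t)-x_2(t)|<\eps$ for $t\ge T_\eps$. By the first statement, for every such $t$ and every $\delta\in\ ]0,K]$,
\[
|b_\delta(t,x_1(t))-b_\delta(t,x_2(t))|<\eps K_\theta,
\]
and since this is uniform in $\delta$ the same estimate holds with $\liminf_{\delta\to0}b_\delta$, and with $\limsup_{\delta\to0}b_\delta$, in place of $b_\delta$ (both finite, since $\phi\ge0$ and, by H\ref{cond-H4}), $b_\delta(t,\cdot)$ stays bounded above as $\delta\to0$). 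Integrating over $[t,t+\lambda]$ and then taking $\liminf_{t\to+\infty}$ (respectively $\limsup_{t\to+\infty}$) shows that the values of $r_p(\lambda)$ (respectively $r_e(\lambda)$) computed along $x_1$ and along $x_2$ differ by at most $\lambda\eps K_\theta$. Since $\eps>0$ is arbitrary while $\lambda$ and $K_\theta$ are fixed, those values coincide, and therefore so do $R_p(\lambda)=\e^{r_p(\lambda)}$ and $R_e(\lambda)=\e^{r_e(\lambda)}$.

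The argument is essentially bookkeeping, and the two points I expect to need care are: (i) the Lipschitz constant $K_\theta$ in H\ref{cond-H2a}) must be independent of $\delta$, so that the pointwise estimate survives first the limit $\delta\to0$, then the integration over $[t,t+\lambda]$, and finally the outer $\liminf$/$\limsup$ in $t$; and (ii) one must trap both auxiliary solutions, from some time on, inside a single interval $]\theta,K[$ bounded away from $0$ — the lower bound being provided by item~\ref{cond-1b-aux}) for one solution and transferred to the other through the uniform attractivity in item~\ref{cond-2-aux}), while the upper bound is just boundedness. Beyond these two bookkeeping points I do not anticipate any genuine obstacle.
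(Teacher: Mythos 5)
Your argument is correct and follows essentially the same route as the paper: part 1 is the direct consequence of H\ref{cond-H2a}) (with $K_\theta$ uniform in $\delta$), and part 2 combines the eventual trapping of both auxiliary solutions in an interval $]\theta,K[$ (lower bound from Proposition~\ref{eq:subsectionAS}~\ref{cond-1b-aux}), closeness from the uniform attractivity in~\ref{cond-2-aux})) with the $\delta$-uniform estimate, then integrates and takes $\liminf$/$\limsup$ in $t$ to conclude the difference is at most $\lambda K_\theta\eps$ with $\eps$ arbitrary. Your explicit appeal to~\ref{cond-1b-aux}) for the lower bound $\theta$ is merely a point the paper leaves implicit; otherwise the proofs coincide.
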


Note that, by~\eqref{eq:bdela} we have
\begin{equation} \label{eq:a}
\left|\liminf_{\delta \to 0} b_\delta(t,a) - \liminf_{\delta \to 0} b_\delta(t,b)\right| < \eps K_\theta
\end{equation}
and
\begin{equation} \label{eq:b}
\left|\limsup_{\delta \to 0} b_\delta(t,a) - \limsup_{\delta \to 0} b_\delta(t,b)\right| < \eps K_\theta.
\end{equation}

We now state our main theorem on the permanence and extinction of the infectives in system~\eqref{eq:ProblemaPrincipal}. A proof of this result will be given in section~\ref{section:PT}.

\begin{theorem}\label{teo:Main}
We have the following for the system~\eqref{eq:ProblemaPrincipal}.
\begin{enumerate}
\item \label{teo:Permanence} If there is a constant $\lambda>0$ such that $R_p(\lambda)>1$ then the infectives $I$ are permanent.
\item \label{teo:Extinction} If there is a constant $\lambda>0$ such that $R_e(\lambda)<1$ then the infectives $I$ go to extinction and any disease free solution $(S_0(t),0,0,0)$ is globally attractive.
\end{enumerate}
\end{theorem}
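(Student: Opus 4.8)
The plan is to prove parts~\ref{teo:Extinction}) and~\ref{teo:Permanence}) in that order. Throughout, fix a solution $x(t)$ of~\eqref{eq:SistemaAuxiliar} with $x(0)>0$; by Proposition~\ref{eq:subsectionAS} it is bounded and satisfies $\theta_0\le x(t)\le M_0$ for all large $t$ and some $0<\theta_0<M_0<K$, and by Lemma~\ref{lema:indep} the numbers $R_p(\lambda),R_e(\lambda)$ do not depend on this choice. Since $(S+I+Q+R)(t)\le K$ for all large $t$ (Proposition~\ref{subsection:BS}\ref{cond-3-bs})), the bound $\limsup_{t\to+\infty}I(t)<m_2:=K+1$ holds for every solution, so in part~\ref{teo:Permanence}) only a uniform positive lower bound for $\liminf_{t}I(t)$ remains.

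\emph{Part~\ref{teo:Extinction}) ($R_e(\lambda)<1$, i.e.\ $r_e(\lambda)<0$).} For a solution with positive initial condition, $S'\le\Lambda(t)-d(t)S$, so by comparison with~\eqref{eq:SistemaAuxiliar} and Proposition~\ref{eq:subsectionAS}\ref{cond-2-aux}) one has $S(t)<x(t)+\eps$ for each $\eps>0$ and all large $t$. By H\ref{cond-H4}), $\phi(t,S,R,Q,I)/I\le\limsup_{\delta\to0}\phi(t,S,0,0,\delta)/\delta$, hence, using that $\phi$ is non decreasing in its first argument (H\ref{cond-H1})) and~\eqref{eq:b},
\[
(\log I)'(t)\;\le\;\limsup_{\delta\to0}b_\delta(t,S(t))\;\le\;\limsup_{\delta\to0}b_\delta(t,x(t))+\eps K_{\theta_0}.
\]
Since $\limsup_{t\to+\infty}\int_t^{t+\lambda}\limsup_{\delta\to0}b_\delta(s,x(s))\ds=r_e(\lambda)<0$ and this integrand is measurable and bounded, choosing $\eps$ small and splitting $[t_1,t]$ into blocks of length $\lambda$ (plus a bounded remainder) gives $\log I(t)\to-\infty$, so $I(t)\to0$; for solutions with $I(0)=0$ this is immediate because $\phi(t,\cdot,\cdot,\cdot,0)=0$, a consequence of H\ref{cond-H4}). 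Then Proposition~\ref{subsection:BS}\ref{cond-4-bs}) gives $Q(t),R(t)\to0$, so $S'=\Lambda(t)-d(t)S+f(t)$ with $\sup_{s\ge t}|f(s)|\to0$ and hence $S(t)-x(t)\to0$ by Proposition~\ref{eq:subsectionAS}\ref{cond-3-aux}) and~\ref{cond-2-aux}); as any disease-free solution also has its $S$-component solving~\eqref{eq:SistemaAuxiliar} and its $Q,R$-components tending to $0$, global attractivity follows.

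\emph{Part~\ref{teo:Permanence}) ($R_p(\lambda)>1$, i.e.\ $r_p(\lambda)>0$).} The heart of the matter is an a priori lower estimate for $(\log I)'$ while $I$ is small: there are constants $\delta_*,C',T_1,\rho>0$ and $\eta_0=\eta_0(\delta_*)$ with $\eta_0\to0$ as $\delta_*\to0$ such that, for \emph{every} solution with positive initial condition, if $I(s)<\delta_*$ on an interval $[t_a,t_b]$ with $t_a$ large and $t_b-t_a\ge T_1$, then
\[
(\log I)'(s)\;\ge\;\inf_{0<\tau<3C'\delta_*}b_\tau(s,x(s))-K_{\theta_1}\eta_0\qquad(s\in[t_a+T_1,t_b]),
\]
and $\int_s^{s+\lambda}\bigl(\inf_{0<\tau<3C'\delta_*}b_\tau(u,x(u))-K_{\theta_1}\eta_0\bigr)\du\ge\rho>0$ for all large $s$. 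To obtain the first line one uses that $\phi(t,S,R,Q,I)<N\delta_*$ on $[t_a,t_b]$ (by H\ref{cond-H2b})), so, arguing as in the proof of Proposition~\ref{subsection:BS}\ref{cond-4-bs}) and using Proposition~\ref{eq:subsectionAS}\ref{cond-3-aux}) and~\ref{cond-2-aux}), there is a solution-independent $T_1$ with $Q(s),R(s)<C'\delta_*$ and $S(s)>x(s)-\eta_0$ on $[t_a+T_1,t_b]$; moreover $(S+I)'\ge\Lambda(t)-c_0(S+I)$, with $c_0$ an upper bound for $\gamma+\sigma+d+\alpha_1$, forces $(S+I)(s)\ge m_0>0$ eventually (as in Proposition~\ref{eq:subsectionAS}\ref{cond-1b-aux})), so for $\delta_*<m_0/2$ one has $S(s)\ge m_0/2$ whenever $I(s)<\delta_*$, keeping $S(s)$ in a fixed interval $[\theta_1,M_0]\subset\,]0,K[$; then H\ref{cond-H2b}), H\ref{cond-H1}) and~\eqref{eq:a} yield the bound for $(\log I)'$. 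The positivity of the integral, for $\delta_*$ small enough, is exactly where $R_p(\lambda)>1$ enters: one needs the uniform-in-$t$ convergence
\[
\lim_{\delta\to0}\;\liminf_{t\to+\infty}\int_t^{t+\lambda}\Bigl(\inf_{0<\tau<3\delta}b_\tau(s,x(s))\Bigr)\ds\;=\;\liminf_{t\to+\infty}\int_t^{t+\lambda}\liminf_{\delta\to0}b_\delta(s,x(s))\ds\;=\;r_p(\lambda)>0,
\]
so that $\delta_*$ can be fixed with the left-hand liminf exceeding $K_{\theta_1}\eta_0(\delta_*)\lambda$. I expect this interchange of $\delta\to0$ and $\liminf_t$ — to be proved using the boundedness of the incidence ratio (H\ref{cond-H2b}), H\ref{cond-H4})), the estimate~\eqref{eq:bdela}, and Lemma~\ref{lema:indep}, as an auxiliary result of Section~\ref{section:PT} — to be the main obstacle.

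Granting the estimate, permanence follows by a standard two-step argument. \emph{Weak persistence:} if a positive solution had $\limsup_{t\to+\infty}I(t)<\delta_*$, then $I(s)<\delta_*$ for all large $s$ and the estimate gives $(\log I)'(s)\ge c(s)$ with $\int_s^{s+\lambda}c\ge\rho>0$, so $\log I(s)\to+\infty$, a contradiction; hence $\limsup_{t\to+\infty}I(t)\ge\delta_*$ for every positive solution. \emph{Upgrade to uniform persistence:} since $(\log I)'\ge-c_0$ always, one deduces from the estimate that, for all large $t$, every maximal interval $[t_a,t_b]$ with $I<\delta_*$ on it satisfies $t_b-t_a\le T_0$ for some $T_0=T_0(\delta_*,\rho,\lambda,c_0,T_1)$ — otherwise $(\log I)'\ge c(s)$ on $[t_a+T_1,t_b]$ would force $I(t_b)>\delta_*$, contradicting $I(t_b)\le\delta_*$ — and on such an interval, with $I(t_a)=\delta_*$, one has $I(s)\ge\delta_*\e^{-c_0(s-t_a)}\ge\delta_*\e^{-c_0T_0}$. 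Combined with weak persistence (which ensures $I$ exceeds $\delta_*$ infinitely often), this gives $\liminf_{t\to+\infty}I(t)\ge m_1:=\delta_*\e^{-c_0T_0}>0$ for every positive solution, with $m_1$ independent of the solution, proving part~\ref{teo:Permanence}).
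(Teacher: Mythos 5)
Your proposal follows the paper's route in all essentials: extinction is proved exactly as in the paper (comparison of $S$ with \eqref{eq:SistemaAuxiliar}, then H\ref{cond-H4}), H\ref{cond-H1}) and \eqref{eq:b}, and integration over windows of length $\lambda$), and permanence rests on the same mechanism (while $I<\delta_*$ one has $\phi\le N\delta_*$, hence $Q,R<C\delta_*$ by Proposition~\ref{subsection:BS}, $S$ squeezed between the perturbed equation \eqref{eq:SistemaAuxiliar2} and $x^*$ via Proposition~\ref{eq:subsectionAS}, and H\ref{cond-H2b}) converting $r_p(\lambda)>0$ into exponential growth of $I$). Two differences are worth noting. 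First, your upgrade from weak to uniform persistence is genuinely different and simpler: you bound the length of every maximal excursion below $\delta_*$ and combine it with $(\log I)'\ge-c_0$ to get the explicit bound $m_1=\delta_*\e^{-c_0T_0}$, whereas the paper argues by contradiction with a sequence of solutions and crossing times at which $I$ drops from $\eps_1/n$ to $\eps_1/n^2$ (see \eqref{eq:maj-eps-eps1a}--\eqref{eq:limite-t-s}); both yield a solution-independent $m_1$, and yours is arguably cleaner — just make sure you use what your weak-persistence contradiction actually gives, namely that $I(s)\ge\delta_*$ for arbitrarily large $s$ (not merely $\limsup I\ge\delta_*$), so that each maximal excursion interval starts with $I=\delta_*$. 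Second, the step you single out as ``the main obstacle'' — replacing $\liminf_{\delta\to0}b_\delta$ by $\inf_{0<\delta<3\delta_1}b_\delta$ with the window integrals positive uniformly in large $t$ — is precisely the point where the paper states \eqref{eq:ByHypothesis} and justifies it only by ``since $r_p(\lambda)>0$, using \eqref{eq:a}''; but \eqref{eq:a} controls only the spatial shift $x^*(s)\mapsto x^*(s)-\eps$, not the $t$-uniformity of the $\delta\to0$ limit, so the paper contains no argument here that you have missed. Your caution is in fact warranted: H\ref{cond-H1})--H\ref{cond-H4}) give only monotone pointwise-in-$t$ convergence of $\int_t^{t+\lambda}\inf_{0<\delta<3\delta_1}b_\delta(s,x^*(s))\ds$ to $\int_t^{t+\lambda}\liminf_{\delta\to0}b_\delta(s,x^*(s))\ds$ as $\delta_1\to0$, which does not automatically pass through $\liminf_{t\to+\infty}$ on an unbounded time interval; the interchange is immediate when $\phi(t,x,0,0,\delta)/\delta$ converges as $\delta\to0$ uniformly in $t$ (as it does for every incidence considered in the paper), and some such uniformity must be invoked — in your write-up just as in the paper's.
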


\section{Autonomous model} \label{section:RNAS}

In this section we are going to consider the autonomous setting. Namely, we are going to assume in system~\eqref{eq:ProblemaPrincipal} that $\Lambda, d, \gamma, \sigma, \alpha_1, \epsilon$ and $\alpha_2$ are constant functions, that $\Lambda,d>0$, and that $\phi$ is independent of $t$. We obtain the autonomous system
\begin{equation}\label{eq:autonomo}
\begin{cases}
S'=\Lambda-\phi_0(S,R,Q,I)-d S \\
I'=\phi_0(S,R,Q,I)-\left[\gamma+\sigma+d+\alpha_1 \right]I \\
Q'=\sigma I -[d+\alpha_2+\eps]Q \\
R'=\gamma I + \eps Q -d R
\end{cases}
\end{equation}
where $\phi_0:(\R^+_0)^4 \to \R$ is continuous, nonnegative and satisfies~H\ref{cond-H1}), H\ref{cond-H2a}) and H\ref{cond-H2b}). In this setting we have that the auxiliary equation~\eqref{eq:SistemaAuxiliar} admits the constant solution $x(t)=\Lambda/d$. Thus we obtain
$$r_p(\lambda)=\left[ \liminf_{\delta \to 0} \frac{\phi_0(\Lambda/d,0,0,\delta)}{\delta} - (\gamma+\sigma+d+\alpha_1) \right] \lambda, \quad R_p(\lambda)=\e^{r_p(\lambda)}$$
and
$$r_e(\lambda)=\left[ \limsup_{\delta \to 0} \frac{\phi_0(\Lambda/d,0,0,\delta)}{\delta} - (\gamma+\sigma+d+\alpha_1) \right] \lambda, \quad R_{e,1}(\lambda)=\e^{r_e(\lambda)}.$$
It is now easy to establish a result that is a version of Theorem~\ref{teo:Main} in the particular case of autonomous systems. Define
$$
R^{\aut}_p= \liminf_{\delta \to 0} \frac{\phi_0(\Lambda/d,0,0,\delta)}{\delta(\gamma+\sigma+d+\alpha_1)}
\quad \text{and} \quad
R^{\aut}_e= \limsup_{\delta \to 0} \frac{\phi_0(\Lambda/d,0,0,\delta)}{\delta(\gamma+\sigma+d+\alpha_1)}.
$$
For any $\lambda>0$, we have $R_p(\lambda)>1$ if and only if $R^{\aut}_p>1$ and $R_e(\lambda)<1$ if and only if $R^{\aut}_e<1$ and this implies the following result.

\begin{theorem}\label{teo:autonomous}
We have the following for the autonomous system~\eqref{eq:autonomo}.
\begin{enumerate}
\item If $R^{\aut}_p>1$ then the infectives $I$ are permanent;
\item If $R^{\aut}_e<1$ then the infectives $I$ go to extinction and any disease free solution is globally attractive.
\end{enumerate}
\end{theorem}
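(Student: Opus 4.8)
The plan is to read Theorem~\ref{teo:autonomous} off from Theorem~\ref{teo:Main}, since~\eqref{eq:autonomo} is precisely the instance of~\eqref{eq:ProblemaPrincipal} obtained by freezing all the coefficients and suppressing the time dependence of the incidence. First I would check that this instance falls under the hypotheses of Section~\ref{section:NP}: continuity, boundedness and nonnegativity of the (now constant) coefficients are immediate, and, since $d,\Lambda>0$ are constants, condition~\eqref{eq:d-Lambda-} holds for any $\omega_d,\omega_\Lambda>0$, with $d_{\omega_d}^-=d\,\omega_d>0$ and $\Lambda_{\omega_\Lambda}^-=\Lambda\,\omega_\Lambda>0$; the structural hypotheses H\ref{cond-H1}), H\ref{cond-H2a}) and H\ref{cond-H2b}) on $\phi_0$ are assumed in this section, so Propositions~\ref{subsection:BS} and~\ref{eq:subsectionAS}, hence Theorem~\ref{teo:Main}, are available for~\eqref{eq:autonomo}.

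Next I would evaluate the thresholds along a convenient solution of the auxiliary equation. Here~\eqref{eq:SistemaAuxiliar} reads $x'=\Lambda-dx$ and admits the constant solution $x(t)\equiv\Lambda/d$; by Lemma~\ref{lema:indep}, the numbers $R_p(\lambda)$ and $R_e(\lambda)$ do not depend on the chosen positive solution, so we may compute $b_\delta$ along $x(t)\equiv\Lambda/d$. Since $\phi_0$ and the remaining coefficients are independent of $t$, the function $b_\delta(t,\Lambda/d)$ is constant in $t$, and the definitions of $r_p(\lambda)$ and $r_e(\lambda)$ collapse to
\[
r_p(\lambda)=\left[\liminf_{\delta\to 0}\frac{\phi_0(\Lambda/d,0,0,\delta)}{\delta}-(\gamma+\sigma+d+\alpha_1)\right]\lambda,\qquad r_e(\lambda)=\left[\limsup_{\delta\to 0}\frac{\phi_0(\Lambda/d,0,0,\delta)}{\delta}-(\gamma+\sigma+d+\alpha_1)\right]\lambda,
\]
as recorded before the statement. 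Because the bracket is a fixed real number, $r_p(\lambda)$ is $\lambda$ times that number, so $R_p(\lambda)=\e^{r_p(\lambda)}>1$ for some $\lambda>0$ --- equivalently, for every $\lambda>0$ --- exactly when the bracket is positive, i.e. when $R^{\aut}_p>1$; in the same way, $R_e(\lambda)<1$ for some (hence every) $\lambda>0$ exactly when $R^{\aut}_e<1$.

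Finally, I would fix any $\lambda>0$ and feed these two equivalences into the two parts of Theorem~\ref{teo:Main}: the first gives permanence of $I$ when $R^{\aut}_p>1$, and the second gives extinction of $I$ together with global attractivity of every disease-free solution when $R^{\aut}_e<1$, which is the claim. No step here is genuinely difficult; the only thing to be careful about is the first one --- verifying that every standing assumption and structural hypothesis of the general model truly specializes to a valid statement for~\eqref{eq:autonomo}, so that Theorem~\ref{teo:Main} applies --- after which the argument reduces to substituting $x(t)\equiv\Lambda/d$ into the definitions of the thresholds and the trivial observation about the sign of $\lambda$ times a constant.
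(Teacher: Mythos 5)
Your proposal is correct and follows essentially the same route as the paper: the paper also verifies that~\eqref{eq:autonomo} fits the general framework, evaluates $r_p(\lambda)$ and $r_e(\lambda)$ along the constant solution $x(t)\equiv\Lambda/d$ of~\eqref{eq:SistemaAuxiliar}, observes that $R_p(\lambda)>1$ iff $R^{\aut}_p>1$ and $R_e(\lambda)<1$ iff $R^{\aut}_e<1$, and then invokes Theorem~\ref{teo:Main}. Nothing essential is missing from your argument.
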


Note that, if the incidence has the particular form
\begin{equation}\label{eq:form}
\phi_0(S,R,Q,I)=\psi(S,R,Q)g(I)I,
\end{equation}
where $g$ is a continuous, bounded and nonnegative function,
then
$$R^{\aut}_p=\frac{\psi(\Lambda/d,0,0) \displaystyle \liminf_{\delta \to 0} g(\delta)}{\gamma+\sigma+d+\alpha_1} \quad \text{and} \quad R^{\aut}_e=\frac{\psi(\Lambda/d,0,0) \displaystyle \limsup_{\delta \to 0} g(\delta)}{\gamma+\sigma+d+\alpha_1}$$
and we conclude that the infectives are permanent (resp. go to extinction) if
$$\liminf_{\delta \to 0} g(\delta)>
\frac{\gamma+\sigma+d+\alpha_1}{ \psi(\Lambda/d,0,0)} \quad \text{(resp.} \ \limsup_{\delta \to 0} g(\delta)<
\frac{\gamma+\sigma+d+\alpha_1}{\psi(\Lambda/d,0,0)}\text{)}.$$
Naturally, if
$$\frac{\gamma+\sigma+d+\alpha_1}{\psi(\Lambda/d,0,0)} \in \ \left[\liminf_{\delta \to 0} g(\delta), \ \limsup_{\delta \to 0} g(\delta) \right]$$
we get no information about the asymptotic behavior of the infectives.

Assuming now that $g$ is constant, we have $R^{\aut}_p=R^{\aut}_e$ and we can recover the results obtained in~\cite{Hethcote-Zhien-Shengbing-MB-2002}.
In fact, the autonomous SIQR model with mass-action incidence ($\phi_0(S,R,Q,I)=\beta S I$), the autonomous SIQR model with standard incidence ($\phi_0(S,R,Q,I)=\beta SI / (S+I+Q+R)$) and the autonomous SIQR model with quarantine-adjusted incidence ($\phi_0(S,R,Q,I)=\beta SI / (S+I+R)$) are all in the conditions we have assumed and our threshold values for those model are $R^{\aut}_p=R^{\aut}_e=\beta\Lambda/[d(\gamma+\sigma+d+\alpha_1)]$ for mass-action incidence and $R^{\aut}_p=R^{\aut}_e=\beta/(\gamma+\sigma+d+\alpha_1)$ for standard and quarantine-adjusted incidences. For these models, the numbers $R_p^{\aut}=R_e^{\aut}$ coincide with the quarantine reproduction number obtained in~\cite{Hethcote-Zhien-Shengbing-MB-2002}. Note that, to put the referred autonomous models considered in~\cite{Hethcote-Zhien-Shengbing-MB-2002} in our context, we need to have the standard incidence and the quarantine-adjusted incidence continuous in $\Delta_{\delta,0}$. This constitutes no problem since the functions $\phi_0$ associated to these incidences can be extended to a continuous function in $\Delta_{\delta,0}$, setting $\phi_0(0,R,0,0)=0$ for the standard incidence and $\phi_0(0,0,0,0)=0$ for the quarantine-adjusted incidence.

Set $\psi(S,Q,R)=S$ and $g(I)=\frac{I^{p-1}}{1+\alpha I^q}$ with $p,q>0$, $\alpha \ge 0$, in~\eqref{eq:form}. This family of contact rates was considered for instance in~\cite{Liu-Hethcote-Levin-JMB-1987, Hethcote-denDriessche-JMB-1991}. Our threshold conditions show that if $p<1$ then the disease is permanent (independently of the other parameters) and if $p=1$, setting
 $$R=\frac{\psi(\Lambda/d,0,0)}{\gamma+\sigma+d+\alpha_1},$$
the disease is permanent if $R>1$ and goes to extinction if $R<1$.
\section{Asymptotically autonomous model} \label{section:NSIQRSIR}

In this section we are going to consider the asymptotically autonomous SIQR model. That is, in addition to the assumptions on Theorem~\ref{teo:Main}, we are going to assume for system~\eqref{eq:ProblemaPrincipal} that there is a continuous function $\phi_0$ such that
$$\lim_{t \to \infty} \phi(t,S,R,Q,I) = \phi_0(S,R,Q,I)$$
for each $(S,R,Q,I) \in \R^4$ and that the time-dependent parameters are asymptotically constant: $\beta(t) \to \beta$, $\Lambda(t) \to \Lambda$, $d(t)\to d$, $\gamma(t) \to \gamma$, $\sigma(t) \to \sigma$, $\alpha_1(t) \to \alpha_1$, $\alpha_2(t) \to \alpha_2$ and $\epsilon(t) \to \epsilon$ as $t \to +\infty$. Denoting by $F(t,S,R,Q,I)$ the right hand side of~\eqref{eq:ProblemaPrincipal} and by $F_0(S,R,Q,I)$ the right hand side of the limiting system, that is of~\eqref{eq:autonomo}, we also need to assume that
	$$\lim_{t \to +\infty} F(t,S,R,Q,I) =F_0(S,R,Q,I),$$
with uniform convergence on every compact set of $(\R_0^+)^4$, and that $(S,R,Q,I) \mapsto F(t,S,R,Q,I)$ and $(S,R,Q,I) \mapsto F_0(S,R,Q,I)$ are locally Lipschitz functions.

There is a general setting that will allow us to study this case. Let $f:\R \times R^n \to \R$ and $f_0:\R^n \to \R$ be continuous and locally Lipschitz in $\R^n$. Assume also that the nonautonomous system
\begin{equation}\label{eq:nonaut}
x'=f(t,x)
\end{equation}
is asymptotically autonomous with limit equation
\begin{equation}\label{eq:aut}
x'=f_0(x),
\end{equation}
that is, assume that $f(t,x) \to f_0(x)$ as $t \to +\infty$ with uniform convergence in every compact set of $\R^n$.
The following theorem is a particular case of a result established in~\cite{Markus-CTNO-1956} (for related results and applications see for example~\cite{Chavez-Thieme-MPD-1995, Mischaikow-Smith-Thieme-TAMS-1995}).
\begin{theorem}\label{Markus}
Let $\Phi(t,t_0,x_0)$ and $\phi(t,t_0,y_0)$ be solutions of~\eqref{eq:nonaut} and~\eqref{eq:aut} respectively. Suppose that $e \in \R^n$ is a locally stable equilibrium point of~\eqref{eq:aut} with attractive region
$$W(e)=\left\{y \in \R^n: \lim_{t \to +\infty} \phi(t,t_0,y) = e \right\}$$
and that $W_\Phi \cap W(e) \ne \emptyset$, where $W_\Phi$ denotes the omega limit of $\Phi(t,t_0,x_0)$. Then $\displaystyle \lim_{t \to +\infty} \Phi(t,t_0,x_0)=e$.
\end{theorem}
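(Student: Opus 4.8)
The plan is to deduce Theorem~\ref{Markus} from the standard theory of asymptotically autonomous equations; it is in fact precisely the result recorded in~\cite{Markus-CTNO-1956}, so what follows is only a sketch of that argument. Write $\gamma^+=\{\Phi(t,t_0,x_0):t\ge t_0\}$ for the forward orbit. The starting point is the structural description of omega limit sets in this setting: provided $\gamma^+$ is precompact (which is automatic in all the applications of this paper, by the dissipativity estimates of Proposition~\ref{subsection:BS}, and is in any case what the result tacitly requires), $W_\Phi$ is a nonempty, compact, connected set, it is invariant under the flow $\phi$ of the limit equation~\eqref{eq:aut}, and $\operatorname{dist}(\Phi(t,t_0,x_0),W_\Phi)\to 0$ as $t\to+\infty$. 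I would quote these facts rather than reprove them.

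First I would show $e\in W_\Phi$. Choosing $z\in W_\Phi\cap W(e)$, the $\phi$-invariance of $W_\Phi$ gives $\{\phi(t,t_0,z):t\ge t_0\}\subseteq W_\Phi$; since $z\in W(e)$ this curve converges to $e$; and since $W_\Phi$ is closed, $e\in W_\Phi$.

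Next I would upgrade ``$e\in W_\Phi$'' to ``$\Phi(t,t_0,x_0)\to e$'', which is where asymptotic autonomy really enters. Because $e\in W_\Phi$, the solution $\Phi$ re-enters every neighbourhood of $e$ at arbitrarily large times. Fix a ball $B_\rho(e)$ with $\overline{B_\rho(e)}\subseteq W(e)$. Local stability of $e$, together with the (uniform) attraction towards $e$ on a neighbourhood inside the basin, provides a time length $\tau>0$ and radii $0<\rho_1<\rho_2\le\rho$ such that, for~\eqref{eq:aut}, any solution starting in $B_{\rho_1}(e)$ stays in $B_{\rho_2}(e)$ throughout an interval of length $\tau$ and lies in $B_{\rho_1/2}(e)$ at its right endpoint. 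Since $f(t,\cdot)\to f_0(\cdot)$ uniformly on the compact set $\overline{B_\rho(e)}$, a Gronwall comparison shows that on any interval of length $\tau$ whose left endpoint is large enough, a solution of~\eqref{eq:nonaut} remains within $\rho_1/4$ of the solution of~\eqref{eq:aut} with the same value at that endpoint. Starting from a time $t_1$ that is both large enough for this comparison and such that $\Phi(t_1,t_0,x_0)\in B_{\rho_1}(e)$, an induction over the intervals $[t_1+k\tau,t_1+(k+1)\tau]$ then confines $\Phi(t,t_0,x_0)$ to $B_{\rho_2}(e)$ for all $t\ge t_1$. As $\rho$, and hence $\rho_2$, can be taken arbitrarily small, $\Phi(t,t_0,x_0)\to e$. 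Equivalently, this trapping forces $W_\Phi=\{e\}$, and then $\operatorname{dist}(\Phi(t,t_0,x_0),W_\Phi)\to0$ finishes the proof.

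The hard part is this last step, and specifically making the nonautonomous/autonomous comparison uniform over the infinitely many intervals $[t_1+k\tau,t_1+(k+1)\tau]$: this is exactly what the uniform-on-compacta convergence $f(t,x)\to f_0(x)$ buys us, since the left endpoints $t_1+k\tau$ tend to $+\infty$. By contrast, the first two steps are soft, using only that omega limit sets of asymptotically autonomous systems are closed and invariant under the limit flow.
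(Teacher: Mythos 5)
The paper does not actually prove this statement: it is quoted verbatim as a particular case of the classical result of Markus~\cite{Markus-CTNO-1956} on asymptotically autonomous systems, so there is no in-paper argument to compare against. Your sketch is a faithful reconstruction of that classical proof (omega limit set nonempty, compact, connected and invariant under the limit flow for a precompact forward orbit; hence $e\in W_\Phi$; then a trapping argument near $e$ using local asymptotic stability of the limit flow plus the uniform-on-compacta convergence $f(t,\cdot)\to f_0(\cdot)$ via Gronwall over intervals of fixed length $\tau$ with left endpoints tending to $+\infty$), and it is sound. You are also right to flag the two points the paper leaves tacit: precompactness of the forward orbit is needed for the structural properties of $W_\Phi$ (and holds here by Proposition~\ref{subsection:BS}), and ``locally stable'' must be read as locally asymptotically stable for the trapping step. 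The only sketch-level imprecisions are harmless: the Gronwall comparison must be run with a standard bootstrap (valid only while both solutions remain in the compact ball, which the induction itself guarantees), and the confinement radius should be taken as $\rho_2+\rho_1/4\le\rho$ rather than $\rho_2$; neither affects the conclusion.
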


Since $(\R^+)^4$ is the attractive region for any solution of system~\eqref{eq:autonomo} with initial condition in $(\R^+)^4$ and the omega limit of every orbit of the asymptotically autonomous system with $I(t_0)>0$ is contained in $(\R^+)^4$, we can use Theorem~\ref{teo:autonomous} and Theorem~\ref{Markus} to obtain the following result.

\begin{theorem}
We have the following for the asymptotically autonomous systems above.
\begin{enumerate}\label{teo:asymp-aut}
\item If $R^{\aut}_p>1$ then the infectives $I$ are permanent;
\item If $R^{\aut}_e<1$ then the infectives $I$ go to extinction and any disease free solution is globally attractive.
\end{enumerate}
\end{theorem}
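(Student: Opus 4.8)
The plan is to deduce both assertions from Theorem~\ref{teo:autonomous}, applied to the limiting autonomous system~\eqref{eq:autonomo}, together with the theory of asymptotically autonomous systems, of which Theorem~\ref{Markus} is the prototype. First I would record a few facts. Since $\phi(t,S,R,Q,0)=0$ for every $t$ (by the standing assumptions on $\phi$), letting $t\to+\infty$ gives $\phi_0(S,R,Q,0)=0$, so the face $\{I=0\}$ is forward invariant under the flow of~\eqref{eq:autonomo}; along it one has $Q(t),R(t)\to 0$ and $S'=\Lambda-dS$, so this face is attracted by the single point $e=(\Lambda/d,0,0,0)$, which is the unique disease-free equilibrium of~\eqref{eq:autonomo} and the only invariant set of~\eqref{eq:autonomo} contained in $\{I=0\}$. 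Moreover, by property~\ref{cond-3-bs}) of Proposition~\ref{subsection:BS}, every solution $\Phi(t)=(S(t),I(t),Q(t),R(t))$ of the asymptotically autonomous system with positive initial condition is bounded, so its $\omega$-limit set $W_\Phi$ is nonempty, compact, contained in $(\R_0^+)^4$, invariant under the flow of~\eqref{eq:autonomo}, and $\Phi(t)$ approaches $W_\Phi$ as $t\to+\infty$ (see~\cite{Markus-CTNO-1956}).

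For the extinction statement, suppose $R_e^{\aut}<1$. By the second part of Theorem~\ref{teo:autonomous}, every solution of~\eqref{eq:autonomo} with nonnegative initial condition converges to $e$, so the region of attraction of $e$ is $W(e)=(\R_0^+)^4\supseteq W_\Phi$, whence $W_\Phi\cap W(e)\ne\emptyset$; since the proof of that theorem also furnishes the local stability of $e$, Theorem~\ref{Markus} applies and gives $\Phi(t)\to e$, so $I(t)\to 0$. Together with the trivial case $I(t_0)=0$, this shows that the infectives go to extinction. For the last claim, note that any disease-free solution $(S(t),0,R(t),Q(t))$ of the asymptotically autonomous system has $Q(t),R(t)\to 0$, while $S(t)$ solves the scalar equation~\eqref{eq:SistemaAuxiliar}, which is itself asymptotically autonomous with limit equation possessing the globally stable equilibrium $\Lambda/d$; hence $S(t)\to\Lambda/d$ by a second application of Theorem~\ref{Markus}, so every disease-free solution converges to $e$. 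Since every solution converges to $e$, every solution is attracted by any prescribed disease-free solution.

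For the permanence statement, suppose $R_p^{\aut}>1$. The upper bound is immediate: by property~\ref{cond-3-bs}) of Proposition~\ref{subsection:BS}, $\limsup_{t\to+\infty}I(t)\le K$ for every solution. For the lower bound I would transfer the uniform persistence of the limiting system. By the first part of Theorem~\ref{teo:autonomous} the infectives are permanent for~\eqref{eq:autonomo}, which means that~\eqref{eq:autonomo} is uniformly persistent with respect to the closed face $X_0=\{I=0\}$; by the facts recorded above, the only invariant set of~\eqref{eq:autonomo} contained in $X_0$ is the single equilibrium $\{e\}$, which is isolated, acyclic, and — by that uniform persistence — attracts no orbit with $I>0$, while property~\ref{cond-3-bs}) of Proposition~\ref{subsection:BS} makes the asymptotically autonomous system point dissipative. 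Under precisely these hypotheses uniform persistence passes from the limiting autonomous system to the asymptotically autonomous one; see~\cite{Chavez-Thieme-MPD-1995, Mischaikow-Smith-Thieme-TAMS-1995}. This produces a constant $m_1>0$, independent of the solution, with $\liminf_{t\to+\infty}I(t)>m_1$ whenever $I(t_0)>0$, which together with the upper bound proves permanence.

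I expect the permanence half to be the main obstacle. Theorem~\ref{Markus} by itself only produces convergence to an equilibrium and carries no persistence information, and the limiting system~\eqref{eq:autonomo} need not have a globally attracting interior equilibrium — SIQR models of this type are known to exhibit sustained oscillations — so one cannot simply invoke Theorem~\ref{Markus} at an interior state. One must instead pass through the chain-recurrence and acyclicity description of $\omega$-limit sets of asymptotically autonomous semiflows and check that the boundary dynamics of~\eqref{eq:autonomo} collapse to the single acyclic equilibrium $e$; the extinction half, by contrast, is an essentially direct application of Theorem~\ref{Markus}.
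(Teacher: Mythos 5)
Your proposal is correct in outline, but only its extinction half follows the paper's route. The paper disposes of the whole theorem in a single remark preceding the statement: it invokes Theorem~\ref{teo:autonomous} together with Markus' theorem (Theorem~\ref{Markus}), asserting that the limit system~\eqref{eq:autonomo} attracts $(\R^+)^4$ and that the omega-limit of every orbit with $I(t_0)>0$ lies in $(\R^+)^4$. For extinction you do essentially the same thing: Theorem~\ref{teo:autonomous} makes $e=(\Lambda/d,0,0,0)$ attract the whole nonnegative orthant for~\eqref{eq:autonomo}, and Theorem~\ref{Markus} then gives $\Phi(t)\to e$, hence $I(t)\to 0$ and convergence of disease-free solutions; you carry the same small debt the paper carries, namely the local stability of $e$ that Theorem~\ref{Markus} formally requires but Theorem~\ref{teo:autonomous} does not state (it needs a short separate verification from the extinction estimates). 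For permanence you depart from the paper, and rightly so: Theorem~\ref{Markus} only yields convergence to a locally stable equilibrium and cannot by itself produce a uniform lower bound on $I$, since under $R^{\aut}_p>1$ the limit SIQR system need not have a globally attracting endemic equilibrium and may oscillate; the paper's one-line justification for this half amounts to asserting that the omega-limit of every orbit with $I(t_0)>0$ avoids the boundary, which is essentially the statement to be proved. Your replacement — transferring uniform persistence from~\eqref{eq:autonomo} to the asymptotically autonomous system through the chain-recurrence/acyclicity theory of asymptotically autonomous semiflows~\cite{Chavez-Thieme-MPD-1995, Mischaikow-Smith-Thieme-TAMS-1995}, after checking that the only compact invariant set of~\eqref{eq:autonomo} in the face $\{I=0\}$ is $\{e\}$, that it is isolated and acyclic, and that its stable set misses $\{I>0\}$ by Theorem~\ref{teo:autonomous} — is the standard and more defensible argument, and the needed references are already in the paper's bibliography. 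What it costs you is reliance on those external persistence-transfer theorems, whose hypotheses (point dissipativity from Proposition~\ref{subsection:BS}, isolation, acyclicity, empty interior stable set, uniformity of the resulting bound $m_1$) you have identified correctly but must still verify in detail, including the minor point that permanence of~\eqref{eq:autonomo}, as defined in the paper, is stated only for orbits with all components initially positive, so the claim that $e$ attracts no orbit with $I>0$ needs a word about boundary initial data.
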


\section{Periodic model with constant natural death and recruitment} \label{section:PER}
In this section we are going to consider a periodic SIQR model. In addition to our assumptions on the function $\phi$ and the parameter functions in Theorem~\ref{teo:Main}, we are going to assume in system~\eqref{eq:ProblemaPrincipal} that $\Lambda(t)=\Lambda$ and $d(t)=d$ are constant functions, that there is a $T>0$ such that $\phi(t,S,R,Q,I)=\phi(t+T,S,R,Q,I)$ and that the remaining time-dependent parameter functions are periodic functions with period $T$.
We have in this case the constant solution $x(t)=\Lambda/d$ and therefore
\[
\begin{split}
r_p(\lambda)
& =\liminf_{t \to +\infty} \int_t^{t+T} \liminf_{\delta \to 0} \frac{\phi(s,\Lambda/d,0,0,\delta)}{\delta} - (\gamma(s)+\sigma(s)+d+\alpha_1(s)) \ds \\
& = \int_0^T \liminf_{\delta \to 0} \frac{\phi(s,\Lambda/d,0,0,\delta)}{\delta} \ds - ( \bar\gamma + \bar\sigma + d + \bar\alpha_1) T
\end{split}
\]
and similarly
$$r_e(\lambda)= \int_0^T \limsup_{\delta \to 0} \frac{\phi(s,\Lambda/d,0,0,\delta)}{\delta} \ds - ( \bar\gamma + \bar\sigma + d + \bar\alpha_1) T,$$
where $\bar f$ denotes the average of $f$ in the interval [0,T]: $\bar f = \frac{1}{T} \int_0^T f(s) \ds$.
Define
\[
R^{\per}_p(\lambda) =  \frac{1}{T} \int_0^T \liminf_{\delta \to 0} \frac{\phi(s,\Lambda/d,0,0,\delta)}{\delta (\bar\gamma + \bar\sigma + d +\bar\alpha_1)} \ds
\]
and
\[
R^{\per}_e(\lambda) =  \frac{1}{T} \int_0^T \limsup_{\delta \to 0} \frac{\phi(s,\Lambda/d,0,0,\tau)}{\tau (\bar\gamma + \bar\sigma + d +\bar\alpha_1)} \ds.
\]

\begin{theorem}\label{teo:nonaut-PER}
We have the following for the periodic system with constant recruitment and death rates.
\begin{enumerate}
\item If $R^{\per}_p>1$ then the infectives $I$ are permanent;
\item If $R^{\per}_e<1$ then the infectives $I$ go to extinction and any disease free solution is globally attractive.
\end{enumerate}
\end{theorem}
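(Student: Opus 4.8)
The plan is to reduce the periodic case to the general nonautonomous Theorem~\ref{teo:Main} by computing the quantities $R^{\per}_p$ and $R^{\per}_e$ explicitly and showing they encode exactly the threshold conditions $R_p(\lambda)>1$ and $R_e(\lambda)<1$ for a suitable choice of $\lambda$. First I would observe that, under the standing assumptions of this section, $\Lambda$ and $d$ are constant, so the auxiliary equation~\eqref{eq:SistemaAuxiliar} has the constant solution $x(t)\equiv\Lambda/d$; by Lemma~\ref{lema:indep}(2) the numbers $R_p(\lambda)$ and $R_e(\lambda)$ do not depend on the chosen solution of~\eqref{eq:SistemaAuxiliar}, so we are free to evaluate $b_\delta(t,x(t))$ along this constant solution. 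This gives $b_\delta(t,\Lambda/d)=\phi(t,\Lambda/d,0,0,\delta)/\delta-(\gamma(t)+\sigma(t)+d+\alpha_1(t))$, a function that is $T$-periodic in $t$ for every fixed $\delta$ (using the periodicity hypotheses on $\phi$, $\gamma$, $\sigma$, $\alpha_1$ and the constancy of $d$).

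Next I would handle the interchange of $\liminf_{\delta\to0}$ (resp.\ $\limsup_{\delta\to0}$) with the time average. Since for fixed $\delta$ the map $t\mapsto b_\delta(t,\Lambda/d)$ is $T$-periodic, so is $t\mapsto\liminf_{\delta\to0}b_\delta(t,\Lambda/d)$; hence $\liminf_{t\to+\infty}\int_t^{t+T}\liminf_{\delta\to0}b_\delta(s,\Lambda/d)\ds$ is simply $\int_0^T\liminf_{\delta\to0}b_\delta(s,\Lambda/d)\ds$, because the integral over any window of length equal to the period is constant in $t$. Splitting this integral using the $T$-periodicity of $\gamma,\sigma,\alpha_1$ and the definition $\bar f=\frac1T\int_0^T f$, we obtain
\[
r_p(T)=\int_0^T\liminf_{\delta\to0}\frac{\phi(s,\Lambda/d,0,0,\delta)}{\delta}\ds-(\bar\gamma+\bar\sigma+d+\bar\alpha_1)T,
\]
and the analogous identity for $r_e(T)$ with $\limsup_{\delta\to0}$. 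Dividing by the positive constant $(\bar\gamma+\bar\sigma+d+\bar\alpha_1)T$, one reads off that $r_p(T)>0$ is equivalent to $R^{\per}_p>1$, and $r_e(T)<0$ is equivalent to $R^{\per}_e<1$; equivalently $R_p(T)=\e^{r_p(T)}>1\iff R^{\per}_p>1$ and $R_e(T)=\e^{r_e(T)}<1\iff R^{\per}_e<1$. Finally I would apply Theorem~\ref{teo:Main} with $\lambda=T$: if $R^{\per}_p>1$ then $R_p(T)>1$, so the infectives are permanent; if $R^{\per}_e<1$ then $R_e(T)<1$, so the infectives go to extinction and every disease-free solution is globally attractive. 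This is exactly the asserted conclusion.

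The only genuinely delicate point is the interchange of $\liminf_{\delta\to0}$ with the time integral in the definitions of $r_p$ and $r_e$: a priori, $\int_0^T\liminf_{\delta\to0}b_\delta(s,\cdot)\ds$ and the $\liminf$/$\limsup$ appearing in the original definition of $r_p(\lambda),r_e(\lambda)$ need not coincide unless we know that the pointwise $\liminf$ in $\delta$ is attained compatibly across $s$. Here, however, the $t$-dependence of $b_\delta$ enters only through $\phi(t,\Lambda/d,0,0,\delta)$ and the periodic coefficients, and periodicity in $t$ collapses the outer $\liminf_{t\to+\infty}$ to an exact equality; the measurability and integrability of $s\mapsto\liminf_{\delta\to0}b_\delta(s,\Lambda/d)$ follow from H\ref{cond-H4}) (which bounds $\phi(t,x,0,0,\delta)/\delta$ for small $\delta$) and the boundedness of $\gamma,\sigma,\alpha_1$. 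So no Fatou-type loss occurs, and the computation displayed in the excerpt just before the theorem is rigorous. With these identities in hand the theorem is immediate from Theorem~\ref{teo:Main}.
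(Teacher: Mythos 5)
Your proposal is correct and follows essentially the same route as the paper: evaluate $b_\delta$ along the constant solution $x(t)\equiv\Lambda/d$, use $T$-periodicity to collapse the outer $\liminf$/$\limsup$ in $t$ to the integral over one period, identify $r_p(T)>0$ with $R^{\per}_p>1$ (and $r_e(T)<0$ with $R^{\per}_e<1$), and apply Theorem~\ref{teo:Main} with $\lambda=T$. One minor remark: the ``interchange'' you flag as delicate is not actually needed, since in the definitions of $r_p(\lambda)$ and $r_e(\lambda)$ the $\liminf_{\delta\to0}$ and $\limsup_{\delta\to0}$ already sit inside the time integral, so only the periodicity argument for the limit in $t$ (plus boundedness and measurability of the integrand, as you note) is required.
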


If we assume in the periodic model above that the incidence has the particular form $\phi(t,S,R,Q,I)=\beta(t)SI$ with $\beta(t+T)=\beta(t)$, then we obtain
\begin{equation}\label{eq:thresholdperiodic1}
R^{\per}_d(\lambda)=R^{\per}_e(\lambda) = \frac{\bar\beta \Lambda}{d(\bar\gamma + \bar\sigma + d +\bar\alpha_1)}
\end{equation}
and if we assume in that model that $\phi(t,S,R,Q,I)=\beta(t)SI/(S+R+Q+I)$ or that $\phi(t,S,R,Q,I)=\beta(t)SI/(S+R+I)$ with $\beta(t+T)=\beta(t)$ we get
\begin{equation}\label{eq:thresholdperiodic2}
R^{\per}_d(\lambda)=R^{\per}_e(\lambda) = \frac{\bar\beta}{\bar\gamma + \bar\sigma + d +\bar\alpha_1}.
\end{equation}
If the parameter functions are all constant, by~\eqref{eq:thresholdperiodic1} and~\eqref{eq:thresholdperiodic2} we obtain again the thresholds for the autonomous system with mass-action, standard and quarantine-adjusted incidence.

The thresholds~\eqref{eq:thresholdperiodic1} and~\eqref{eq:thresholdperiodic2}
can also be obtained using the methods developed in~\cite{Wang-Zhao-JDDE-2008, Rebelo-Margheri-Bacaer-JMB-2012} for general periodic epidemic models, that constitute periodic versions of the general autonomous model considered in~\cite{Driessche-Watmough-MB-2002}.

\section{General model with mass-action incidence} \label{section:GEN}
In this section we are going to consider a nonautonomous SIQR model with mass-action incidence. In addition to our assumptions on the function $\phi$ and the parameter functions in Theorem~\ref{teo:Main}, we are going to consider the particular cases of mass-action incidence and we will also assume that natural death and recruitment are constant. We have in this case the constant solution $x(t)=\Lambda/d$ for system~\eqref{eq:SistemaAuxiliar}.

For mass-action incidence, $\phi(t,S,I,Q,R)=\beta(t)SI$, we have
\[
\begin{split}
r_p(\lambda)
& =\liminf_{t \to +\infty} \int_t^{t+\lambda} \beta(s)\frac{\Lambda}{d} - (\gamma(s)+\sigma(s)+d+\alpha_1(s)) \ds \\
& \le \lambda \left( \frac{\Lambda}{d} \beta^-_\lambda - [\gamma^-_\lambda+\sigma^-_\lambda+d+(\alpha_1)^-_\lambda] \right)
\end{split}
\]
and similarly
$$r_e(\lambda) \ge \lambda \left( \frac{\Lambda}{d} \beta^+_\lambda - [\gamma^+_\lambda+\sigma^+_\lambda+d+(\alpha_1)^+_\lambda] \right).$$
Define
\[
R^{SIM}_p(\lambda) =  \frac{\Lambda \beta^-_\lambda}
{d[\gamma^-_\lambda+\sigma^-_\lambda+d+(\alpha_1)^-_\lambda]}
\]
and
\[
R^{SIM}_p(\lambda) =  \frac{\Lambda \beta^+_\lambda} {d[\gamma^+_\lambda+\sigma^+_\lambda+d+(\alpha_1)^+_\lambda]}.
\]

\begin{theorem}
We have the following for the general non-autonomous system with mass-action incidence and constant recruitment and death rates.
\begin{enumerate}\label{teo:nonaut-SIM}
\item If there is $\lambda>0$ such that $R^{SIM}_p(\lambda)>1$ then the infectives $I$ are permanent;
\item If there is $\lambda>0$ such that $R^{SIM}_e(\lambda)<1$ then the infectives $I$ go to extinction and any disease free solution is globally attractive.
\end{enumerate}
\end{theorem}

\section{General model with standard and quarantine-adjusted incidence} \label{section:GENSQA}
Now, we will consider the general nonautonomous SIQR models with standard and quarantine-adjusted incidence. In addition to our assumptions on the function $\phi$ and the parameter functions in Theorem~\ref{teo:Main}, we are going to consider the particular cases of standard and quarantine-adjusted incidence. Let $x(t)$ denote a solution of~\eqref{eq:SistemaAuxiliar} with $x(0)>0$.

For standard and quarantine-adjusted incidence, respectively 	$$\phi(t,S,I,Q,R)=\frac{\beta(t)SI}{S+I+R+Q} \quad \text{ and } \quad \phi(t,S,I,Q,R)=\frac{\beta(t)SI}{S+I+R},$$
we have
\[
\begin{split}
r_p(\lambda)
& =\liminf_{t \to +\infty} \int_t^{t+\lambda} \beta(s) \ \liminf_{\delta \to 0} \frac{x(s)}{x(s)+\delta} - (\gamma(s)+\sigma(s)+d+\alpha_1(s)) \ds \\
& \le \lambda \left( \beta^-_\lambda - [\gamma^-_\lambda+\sigma^-_\lambda+d+(\alpha_1)^-_\lambda] \right)
\end{split}
\]
and similarly
$$r_e(\lambda) \ge \lambda \left( \beta^+_\lambda - [\gamma^+_\lambda+\sigma^+_\lambda+d+(\alpha_1)^+_\lambda] \right).$$
Define
\[
R^{S/QA}_p(\lambda) =  \frac{\beta^-_\lambda}
{\gamma^-_\lambda+\sigma^-_\lambda+d+(\alpha_1)^-_\lambda}
\]
and
\[
R^{S/QA}_e(\lambda) =  \frac{\beta^+_\lambda} {\gamma^+_\lambda+\sigma^+_\lambda+d+(\alpha_1)^+_\lambda}.
\]

\begin{theorem}\label{teo:nonaut-S-QA}
We have the following for the general non-autonomous system with standard or quarantine-adjusted incidence.
\begin{enumerate}
\item If there is $\lambda>0$ such that $R^{S/QA}_p(\lambda)>1$ then the infectives $I$ are permanent;
\item If there is $\lambda>0$ such that $R^{S/QA}_e(\lambda)<1$ then the infectives $I$ go to extinction and any disease free solution is globally attractive.
\end{enumerate}
\end{theorem}
\section{Proof of Theorem~\ref{teo:Main}} \label{section:PT}

\subsection{Proof of Lemma~\ref{lema:indep}}
Assume that $\eps>0$, that $0<\theta\le K$, that $a,b \in (\theta, \delta)$ and that $|a-b|<\eps$. We have, by~H\ref{cond-H2a}), for every $0 \le \tau \le \delta$, $$|\phi(t,a,0,0,\delta) - \phi(t,b,0,0,\delta)| \le K_\theta |a-b| \delta < K_\theta \delta \eps.$$
Therefore,
\begin{equation} \label{eq:supinftau}
\dfrac{\phi(t,a,0,0,\delta)}{\delta} - K_\theta \eps < \dfrac{\phi(t,b,0,0,\delta)}{\delta} < \dfrac{\phi(t,a,0,0,\delta)}{\delta}+K_\theta \eps,
\end{equation}
and, adding and subtracting $\gamma(t)+\sigma(t)+d(t)+\alpha_1(t)$, we get~\eqref{eq:bdela}.

We will now show that in fact $r_p(\lambda)$, $r_e(\lambda)$ are independent of the particular solution $x(t)$ of~\eqref{eq:SistemaAuxiliar} with $x(0)>0$. In fact, by~\ref{cond-2-aux}) in Proposition~\ref{eq:subsectionAS}, for every $\eps>0$ and every solution $x_1(t)$ of~\eqref{eq:SistemaAuxiliar} with $x_1(0)>0$ there is a $T_\eps >0$ such that $|x(t)- x_1(t)| < \eps$ for every $t \ge T_\eps$. Choose $\eps_K>0$ such that, for $\eps < \eps_K$ and $t \ge T_\eps$, we have $x(t), x_1(t), x(t) \pm \eps \in \, (\theta,K)$, for some $\theta>0$ that depends on $x$ and $x_1$.
Leting $a=x(t)$, $b=x_1(t)$ we obtain by~\eqref{eq:a}
\[
\begin{split}
\int_t^{t+\lambda} \liminf_{\delta \to 0} b_\delta(s,x_1(s)) \ds - \lambda K_\theta \eps
& < \int_t^{t+\lambda} \liminf_{\delta \to 0} b_\delta(s,x(s)) \ds \\
& < \int_t^{t+\lambda} \liminf_{\delta \to 0} b_\delta(s,x_1(s)) \ds + \lambda K_\theta \eps,
\end{split}
\]
for every $t \ge T_\eps$. We conclude that, for every $0<\eps<\eps_K$,
\[
\left| \liminf_{t \to +\infty} \int_t^{t+\lambda} \liminf_{\delta \to 0} b_\delta(s,x(s)) \ds - \liminf_{t \to +\infty} \int_t^{t+\lambda} \ \liminf_{\delta \to 0} b_\delta(s,x_1(s)) \ds \right|  < \lambda K_\theta \eps,
\]
and thus
\[
\liminf_{t \to +\infty} \int_t^{t+\lambda} \liminf_{\delta \to 0} b_\delta(s,x(s)) \ds = \liminf_{t \to +\infty} \int_t^{t+\lambda} \liminf_{\delta \to 0} b_\delta(s,x_1(s)) \ds.
\]

Letting $\delta \to 0$ we conclude that $r_p(\lambda)$ (and thus $R_p(\lambda)$) is independent of the chosen solution. Using~\eqref{eq:b}, the same reasoning shows that $r_e(\lambda)$ (and thus $R_e(\lambda)$) are also independent of the particular solution. This proves the lemma.

\subsection{Proof of Theorem~\ref{teo:Main}}
Assume that $R_p(\lambda)>1$ (and thus $r_p(\lambda)>0$) and let $(S(t),I(t),R(t),Q(t))$ be any solution of~\eqref{eq:ProblemaPrincipal}
with $S(T_0)>0$, $I(T_0)>0$, $R(T_0)>0$ and $Q(T_0)>0$ for some $T_0 \ge 0$. By~\ref{cond-3-bs}) in Proposition~\ref{subsection:BS} we may assume that for $t \ge T_0$ we have $(S(t),I(t),R(t),Q(t)) \in \Delta_{0,K}$.

Since $r_p(\lambda)>0$, using~\eqref{eq:a} we conclude that there are constants $0 < \delta_1 < K$ and $\Theta >0$ such that
\begin{equation}\label{eq:ByHypothesis}
\int_t^{t+\lambda} \inf_{0<\delta<3\delta_1} b_\delta(s,x^*(s)-\eps) \ds > \Theta
\end{equation}
for all $t \ge 0$ sufficiently large, say $t \ge T_1$, and $\eps>0$ sufficiently small, say for $0<\eps\le \bar \eps$, where $x^*$ is any fixed solution of~\eqref{eq:SistemaAuxiliar} with $x^*(0)>0$. Note that, by~\ref{cond-1a-aux}) in Proposition~\ref{eq:subsectionAS} we have $x^*(T_0)>0$ and by by~\ref{cond-1b-aux}) in Proposition~\ref{eq:subsectionAS} we may assume (eventually using a bigger $T_1$) that $x^*(t) \ge \theta$ for some $\theta >0$ and $t \ge T_1$.

Define
\begin{equation}\label{eq:variacao_eps}
\eps_0 = \min \left\{ \frac{\Theta}{4K_\theta\lambda}, \bar \eps \right\} \quad \text{and} \quad \eps_1 = \min\left\{\delta_1,\frac{\delta_1}{C},\frac{\eps_0}{2DN}\right\},
\end{equation}
where $K_\theta$ is given by H\ref{cond-H2a}), $N$ is given by H\ref{cond-H2b}), $C$ is given by~\ref{cond-4-bs}) in Proposition~\ref{subsection:BS} and $D$ is given by~\ref{cond-3-aux}) in Proposition~\ref{eq:subsectionAS}.

We will show that
\begin{equation} \label{eq:limsup-I-ge-0}
\limsup_{t \to +\infty} I(t) \ge \eps_1.
\end{equation}
Assume by contradiction that~\eqref{eq:limsup-I-ge-0} is not true. Then there exists $T_2 \ge 0$ satisfying $I(t) < \eps_1$ for all $t \ge T_2$. Consider the auxiliary equation
\begin{equation}\label{eq:SistemaAuxiliar2}
x'=\Lambda(t) - d(t) x - N \eps_1,
\end{equation}
where $N$ is given by~H\ref{cond-H2b}).

Let $\bar x(t)$ be the solution of~\eqref{eq:SistemaAuxiliar} with $\bar x(T_0)=S(T_0)$ and let $x(t)$ be the solution of~\eqref{eq:SistemaAuxiliar2} with $x(T_0)=S(T_0)$.  By~\eqref{eq:variacao_eps} and \ref{cond-3-aux}) in Proposition~\ref{eq:subsectionAS} we obtain
\begin{equation}\label{eq:meio-eps-0}
|x(t) - \bar x(t)| \le D N \eps_1 \le \frac{\eps_0}{2},
\end{equation}
for all $t \ge T_0$.

According to~\ref{cond-2-aux}) in Proposition~\ref{eq:subsectionAS}, $x^*(t)$
is globally uniformly attractive on $\R_0^+$. Therefore, there exists $T_3>0$ such that, for all $t \ge T_3$, we have
\begin{equation}\label{eq:xbar-x*}
|\bar x(t) - x^*(t)| \le \frac{\eps_0}{2}.
\end{equation}

By~H\ref{cond-H2b}) we have
$$0 \le \phi(t,S(t),R(t),Q(t),I(t))\le N I(t) < N \eps_1,$$
for all $t \ge \max\{T_0,T_2\}$, and thus, by the first equation in~\eqref{eq:ProblemaPrincipal},
\begin{equation}\label{eq:SistemaAuxiliar2-Maj}
\Lambda(t) - d(t) S \ge S' > \Lambda(t) - d(t) S - N \eps_1.
\end{equation}
for all $t \ge \max\{T_0,T_2\}$. Comparing~\eqref{eq:SistemaAuxiliar2}
and~\eqref{eq:SistemaAuxiliar2-Maj} we conclude that $S(t) > x(t)$ for all $t \ge \max\{T_0,T_2\}$.  Write $T_4=\max\{T_0,T_1,T_2,T_3\}$. By~\eqref{eq:meio-eps-0} and~\eqref{eq:xbar-x*}
we have, for all $t \ge T_4$,
$$x^*(t)+\eps_0 > x^*(t)+\frac{\eps_0}{2} \ge \bar{x}(t) \ge S(t) > x(t) \ge \bar x(t) -\frac{\eps_0}{2} \ge x^*(t) - \eps_0,$$
and thus
\begin{equation} \label{eq:perm1}
|S(t)-x^*(t)+\eps_0| \le |S(t)-x^*(t)|+\eps_0 <2\eps_0,
\end{equation}
for all $t \ge T_4$.
By~\eqref{eq:perm1} and~\eqref{eq:a} in Lemma~\ref{lema:indep} we get, for all $t \ge T_4$,
\begin{equation} \label{eq:perm2}
b_\delta(t, S(t)) > b_\delta(t,x^*(t)-\eps_0) - 2K_\theta \eps_0.
\end{equation}

For $t > 0$ sufficiently large, say $t \ge T_5$, we have by~\eqref{eq:boundQR} and~\eqref{eq:variacao_eps}, $Q(t)<C\eps_1<\delta_1$ and $R(t)<C\eps_1<\delta_1$ and thus $I(t),Q(t),R(t) \in (0,\delta_1)$. We may assume that $T_5 \ge T_4$.

According to~\eqref{eq:variacao_eps} we have $\eps_0 \le \bar\eps$ and by~\eqref{eq:ByHypothesis} we get
\begin{equation}\label{eq:majinteg}
\int_{T_5}^t \inf_{0<\delta<3\delta_1} b_\delta(\tau,x^*(s)-\eps_0) \ds > \left( \frac{1}{\lambda} (t-T_5) -1 \right) \Theta.
\end{equation}

By the second equation in~\eqref{eq:ProblemaPrincipal} we obtain
$$ I'(t)=\left[\frac{\phi(t,S(t),R(t),Q(t),I(t))}{I(t)}-(\gamma(t)+\sigma(t)+d(t)+\alpha_1(t))\right] I(t)
$$
and thus, integrating from $T_5$ to $t$, recalling that $I(t),Q(t),R(t) \in \ ]0,\delta_1[$ for all $t \ge T_5$ and using~H\ref{cond-H2b}), \eqref{eq:perm2}, \eqref{eq:majinteg} and~\eqref{eq:variacao_eps}, we have
\[
\begin{split}
I(t)
& = I(T_5) \e^{\int_{T_5}^t \phi(s,S(s),R(s),Q(s),I(s)) / I(s) -(\gamma(s)+\delta(s)+d(s)+\alpha_1(s)) \ds} \\
& \ge I(T_5) \e^{\int_{T_5}^t \underset{0<\delta<3\delta_1}{\inf} b_\delta(s, S(s)) \ds} \\
& > I(T_5) \e^{\int_{T_5}^t \underset{0<\delta<3\delta_1}{\inf}  b_\delta(s,x^*(s)-\eps_0) \ds - 2K_\theta \eps_0(t-T_5)}\\
& > I(T_5) \e^{(\frac{\Theta}{\lambda} - 2K_\theta \eps_0)(t-T_5) -\Theta} \\
& \ge I(T_5) \e^{\frac{\Theta}{2\lambda}(t-T_5) -\Theta},
\end{split}
\]
and we conclude that $I(t) \to +\infty$. This contradicts the assumption
that $I(t) < \eps_1$ for all $t \ge T_2$. From this we conclude that~\eqref{eq:limsup-I-ge-0} holds.

Next we will prove that for some constant $\ell>0$ we have in fact
\begin{equation}\label{eq:liminf-ge-0}
\liminf_{t \to +\infty} I(t)  > \ell
\end{equation}
for every solution $(S(t),I(t),R(t),Q(t))$ with $S(T_0)>0$, $I(T_0)>0$, $R(T_0)>0$ and $Q(T_0)>0$. By~\eqref{eq:ByHypothesis}, for all $\xi \ge \lambda$, $t \ge T_1$ and $0<\theta<\bar\eps$, we have
\begin{equation}\label{eq:ByHypothesis2}
\int_t^{t+\xi} \inf_{0<\delta\le\delta_1} b_\delta(s,x^*(s)-\theta) \ds > N.
\end{equation}

We proceed by contradiction. Assume that~\eqref{eq:liminf-ge-0} does not hold. Then there exists a sequence of initial values $(x_n)_{n \in \N}$, with $x_n=(S_n,R_n,Q_n,I_n)$ with $S_n>0$, $R_n>0$, $Q_n>0$ and $I_n>0$ such that
\[
\liminf_{t \to +\infty} I(t,x_n) < \frac{\eps_1}{n^2},
\]
where $I(t,x_n)$ denotes the solution of~\eqref{eq:ProblemaPrincipal} with initial conditions $S(T_1)=S_n$, $I(T_1)=I_n$, $R(T_1)=R_n$ and $Q(T_1)=Q_n$.
By~\eqref{eq:limsup-I-ge-0}, given $n \in \N$, there are two
sequences $(t_{n,k})_{k \in \N}$ and $(s_{n,k})_{k \in \N}$ with
\[
T_1 < s_{n,1} < t_{n,1} < s_{n,2} < t_{n,2} < \cdots < s_{n,k} < t_{n,k} < \cdots
\]
and $\displaystyle \lim_{k \to +\infty} s_{n,k}  = +\infty$, such that
\begin{equation}\label{eq:maj-eps-eps1a}
I(s_{k,n},x_n) = \frac{\eps_1}{n}, \quad I(t_{k,n},x_n) = \frac{\eps_1}{n^2}
\end{equation}
and
\begin{equation}\label{eq:maj-eps-eps2}
\frac{\eps_1}{n^2} < I(t,x_n) < \frac{\eps_1}{n}, \quad \text{for all} \quad t \in ]s_{n,k},t_{n,k}[.
\end{equation}
By the second equation in~\eqref{eq:ProblemaPrincipal} we have
\[
\begin{split}
I'(t,x_n)
& = I(t,x_n) [ (\phi(t, S(t,x_n), R(t,x_n), Q(t,x_n), I(t,x_n))/I(t,x_n) - \\
& \quad - (\gamma(t)+\delta(t)+d(t)+\alpha_1(t)) ] \\
& \ge - (\gamma(t)+\sigma(t)+d(t)+\alpha_1(t)) I(t,x_n) \\
& \ge - (\gamma_S+\sigma_S+d_S+(\alpha_1)_S) I(t,x_n),
\end{split}
\]
Therefore we obtain
\[
\int_{s_{k,n}}^{t_{k,n}} \frac{I'(\tau,x_n)}{I(\tau,x_n)} \dtau \ge -(\gamma_S+\sigma_S+d_S+(\alpha_1)_S) (t_{k,n}-s_{k,n})
\]
and thus $ I(t_{k,n},x_n) \ge I(s_{k,n},x_n) \e^{ -(\gamma_S+\sigma_S+d_S+(\alpha_1)_S) (t_{k,n}-s_{k,n})}$.
By~\eqref{eq:maj-eps-eps1a} we get $\frac{1}{n} \ge \e^{-(\gamma_S+\sigma_S+d_S+(\alpha_1)_S) (t_{k,n}-s_{k,n})}$
and therefore we have
\begin{equation}\label{eq:limite-t-s}
t_{k,n}-s_{k,n}  \ge \frac{\log n}{\gamma_S+\sigma_S+d_S+(\alpha_1)_S} \to +\infty
\end{equation}
as $n \to +\infty$.

According to~\eqref{eq:maj-eps-eps2}, for all $t \in ]s_{k,n},t_{k,n}[$, we have $I(t,x_n)< \eps_1 / n < \eps_1$ and we conclude that $\phi(t,S,R,Q,I) \le N \eps_1$ for all $t \in ]s_{k,n},t_{k,n}[$. Thus by~\eqref{eq:ProblemaPrincipal} we obtain
\[
S' \ge \Lambda(t)-d(t)S-N\eps_1
\]
for all $t \in ]s_{k,n},t_{k,n}[$.

By comparison we have $S(t,x_n) \ge x(t)$ for all $t \in ]s_{k,n},t_{k,n}[$, where $x$ is the solution of~\eqref{eq:SistemaAuxiliar2} with $x(s_{k,n})=S(s_{k,n},x_n)$. Using~\eqref{eq:meio-eps-0} we obtain, for all $t \in ]s_{k,n},t_{k,n}[$,
\begin{equation}\label{eq:maj-x-barx-y-bary}
|x(t) - \bar x(t)| \le \frac{\eps_0}{2},
\end{equation}
where $\bar x(t)$ is a solution of~\eqref{eq:SistemaAuxiliar}
with $\bar x(s_{k,n})=S(s_{k,n},x_n)$. Since $x^*(t)$  is globally uniformly attractive, there exists $T^*>0$, independent of $n$ and $k$, such that
\begin{equation}\label{eq:maj-x-x*-y-y*}
|\bar x(t) - x^*(t)| \le \frac{\eps_0}{2},
\end{equation}
for all $t \ge s_{k,n}+T^*$. By~\eqref{eq:limite-t-s} we can choose
$B > 0$  such that $t_{n,k}-s_{n,k} > \lambda + T^*$ for all $n \ge B$. Given $n \ge B$, by~\eqref{eq:ByHypothesis2}, \eqref{eq:maj-x-barx-y-bary}
and~\eqref{eq:maj-x-x*-y-y*} and by the second equation
in~\eqref{eq:ProblemaPrincipal} we get
\[
\begin{split}
\frac{\eps_1}{n^2}
& = I(t_{k,n},x_n) \ge I(s_{k,n}+T^*,x_n) \e^{\int_{s_{k,n}+T^*}^{t_{k,n}} \underset{\delta \to 0}{\liminf} \, b_\delta(\tau,S(\tau,x_n)) \dtau} \\
& \ge \frac{\eps_1}{n^2} \e^{\int_{s_{k,n}+T^*}^{t_{k,n}} \underset{\delta \to 0}{\liminf} \, b_{I,\delta_1}(\tau,x^*(\tau)-\eps_0) \dtau} >
\frac{\eps_1}{n^2}.
\end{split}
\]
This leads to a contradiction and establishes that $\displaystyle \liminf_{t \to +\infty} I(t) > \ell >0$. Thus,~\ref{teo:Permanence}. in Theorem~\ref{teo:Main} is established.

Assume now that $R_e(\lambda)<1$ (and thus $r_e(\lambda)<0$) for some $\lambda>0$. Then we can choose $\Theta>0$ and $T_0>0$ such that
\begin{equation}\label{eq:condextinctionA}
\int_t^{t+\lambda} b_\delta(s,x^*(s)+\eps) \ds < -\Theta,
\end{equation}
for all $t \ge T_0$ and $\eps>0$ sufficiently small, say $\eps<\bar\eps$. By~\ref{cond-3-bs}) in Proposition~\ref{subsection:BS} we can assume that $(S(t),I(t),R(t),Q(t)) \in \Delta_{0,K}$ for $t \ge T_0$.

We will show that
\begin{equation}\label{eq:cotrad2}
  \limsup_{t \to +\infty} I(t)=0.
\end{equation}

By~\eqref{eq:ProblemaPrincipal} and~H\ref{cond-H2b}) we have
\[
S'(t) \le \Lambda(t)-d(t)S
\]
for all $t \ge T_0$. For any solution $x(t)$ of~\eqref{eq:SistemaAuxiliar} with $x(T_0)=S(T_0)$ we have by comparison $S(t) \le x(t)$ for all $t \ge T_0$.
Since $x^*(t)$ is globally uniformly attractive, there exists $\eps_1 < \bar\eps$ and $T_1 \ge T_0$ such that, for all $t>T_1$, we have
\[
|x(t)-x^*(t)| < \eps_1.
\]
Therefore, for all $t \ge T_1$, we have
\begin{equation} \label{eq:extinction-S-x}
S(t) \le x(t) \le x^*(t)+\eps_1.
\end{equation}
Thus, by the second equation in~\eqref{eq:ProblemaPrincipal} and by~H\ref{cond-H4}) we conclude that
\[
\begin{split}
\frac{I'(t)}{I(t)}
& = \frac{\phi(t,S(t),R(t),Q(t),I(t))}{I(t)} - (\gamma(t)+\sigma(t)+d(t)+\alpha_1(t)) \\
& \le \limsup_{\delta \to 0} \frac{\phi(t,S(t),0,0,\delta)}{\delta}  -(\gamma(t)+\sigma(t)+d(t)+\alpha_1(t)) \\
& \le \limsup_{\delta \to 0} b_\delta(t,S(t))
\end{split}
\]
and finally
\[
\log \frac{I(t)}{I(T_1)} = \int^{t}_{T_1} \frac{I'(\tau)}{I(\tau)} \dtau \le
\int^{t}_{T_1} \underset{\delta \to 0}{\limsup} \, b_\delta(s,S(s)) \ds.
\]
Therefore, by~H\ref{cond-H1}), \eqref{eq:extinction-S-x} and~\eqref{eq:b} we have
\[
I(t) = I(T_1) \e^{\int_{T_1}^t \underset{\delta \to 0}{\limsup} \, b_\delta(s,S(s)) \ds}
\le I(T_1) \e^{\int_{T_1}^t \underset{\delta \to 0}{\limsup} \, b_\delta(s, x^*(s)+\eps_1)\ds}.
\]
By~\eqref{eq:condextinctionA} we conclude that
$$I(t) \le I(T_1) \e^{-\left(\frac{t-T_1}{\lambda}+1\right)\Theta}.$$
Therefore $\displaystyle \lim_{t \to +\infty} I(t)=0$ and this proves that the infectives go to extinction.

Next, still assuming that $R_e(\lambda)<1$ for some $\lambda>0$, we let $(S(t),R(t),Q(t),I(t))$ be a solution of~\eqref{eq:ProblemaPrincipal} with non-negative initial conditions and $(S_0(t),0,0,0)$ be a disease-free solution of~\eqref{eq:ProblemaPrincipal} with non-negative initial conditions.
By~\ref{cond-3-bs}) in Proposition~\ref{subsection:BS} we can assume that $(S(t),I(t),R(t),Q(t))$, $(S_0(t),0,0,0) \in \Delta_{0,K}$
for $t \ge T_0$. Thus
$$ S_0(t),S(t),R(t),Q(t) < K,$$
for $t \ge T_0$. By Theorem~\ref{teo:Extinction} we have $\displaystyle \lim_{t \to +\infty} I(t) = 0$. Therefore given $\eps>0$ there exists $T_\eps \ge T_0$ such that $I(t)<\eps$, $S(t),R(t),Q(t) < K$ for $t \ge T_\eps$. Thus
$$ \Lambda(t)-d(t)S \ge S'(t) \ge \Lambda(t)-d(t) S - \eps N,$$
for all $t \ge T_\eps$. By comparison we get
\begin{equation} \label{eq:DFS-comp}
x_1(t) \ge S(t) \ge x_2(t)
\end{equation}
 where
$x_1(t)$ is the solution of~\eqref{eq:SistemaAuxiliar} and $x_2(t)$ is the solution of~\eqref{eq:SistemaAuxiliar2} with $\eps_1=\eps$ and $x_1(T_\eps) = x_2(T_\eps) = S(T_\eps)$. By~\ref{cond-3-aux}) in Proposition~\ref{eq:subsectionAS} we obtain, for all $t \ge T_1$,
\begin{equation}\label{eq:DFS-A}
|x_2(t) - x_1(t)| \le \eps N D,
\end{equation}
Since $S_0(t)$ is a solution of~\eqref{eq:SistemaAuxiliar}, it is globally uniformly attractive, by~\ref{cond-2-aux}) in Proposition~\ref{eq:subsectionAS}. Thus, there is $T_2 \ge T_\eps$ such that
\begin{equation}\label{eq:DFS-B}
|x_1(t) - S_0(t)| \le \eps
\end{equation}
for all $t \ge T_2$. By~\eqref{eq:DFS-comp}, \eqref{eq:DFS-A} and~\eqref{eq:DFS-B} we conclude that
\begin{equation}\label{eq:diseasefree-1}
S(t) \ge x_2(t) \ge x_1(t) - \eps ND \ge
S_0(t) - (1+ND)\eps,
\end{equation}
for all $t \ge T_2$, and also, by~\eqref{eq:DFS-comp} and \eqref{eq:DFS-B},
\begin{equation}\label{eq:diseasefree-2}
S(t) \le S_0(t) + \eps
\end{equation}
for all $t \ge T_2$.
Since $\eps>0$ can be made arbitrarily small by taking $t \ge T_\eps$, by~\eqref{eq:diseasefree-1} and~\eqref{eq:diseasefree-2} we have
\begin{equation}\label{eq:S-to-S0}
\lim_{t \to +\infty} |S(t) - S_0(t)|=0.
\end{equation}

Let $T_3 \ge 0$ be such that $I(t)<\eps$ for every $t \ge T_3$. By the third equation in~\eqref{eq:ProblemaPrincipal}, we get
$$Q'(t) \le \eps \sigma_S - (d_I+(\alpha_2)_I+\eps_I) Q(t).$$
and thus
$$Q(t) \le Q(T_3) \e^{-(d_I+(\alpha_2)_I+\eps_I)(t-T_3)} +\frac{\eps \sigma_S (1-\e^{-(d_I+(\alpha_2)_I+\eps_I)(t-T_3)})}{d_I+(\alpha_2)_I+\eps_I}.$$
Therefore
\begin{equation}\label{eq:Q-to-Q0}
\lim_{t \to +\infty} Q(t) \le \frac{\eps \sigma_S}{d_I+(\alpha_2)_I+\eps_I}.
\end{equation}
Since $\eps>0$ can be made arbitrarily small we get $\displaystyle \lim_{t \to +\infty} Q(t)=0$.

Let $T_4 \ge T_3$ be such that $Q(t)<\eps$ for every $t \ge T_4$. By the fourth equation in~\eqref{eq:ProblemaPrincipal} and since $I(t)<\eps$ and for all $t \ge 3$, we get
$$R'(t) \le \eps (\gamma_S+\eps_S) -d_I R(t).$$
We obtain
$$  R(t) \le R(T_4) \e^{-d_I(t-T_4)}+\frac{ \eps (\gamma_S+\eps_S)(1-\e^{-d_I(t-T_4)})}{d_I}.$$
and thus
\begin{equation}\label{eq:R-to-R0}
\lim_{t \to +\infty} R(t) \le \frac{\eps (\gamma_S+\eps_S)}{d_I}.
\end{equation}
Since $\eps>0$ can be made arbitrarily small we get $\displaystyle \lim_{t \to +\infty} R(t)=0$.

We conclude that any disease-free solution $(S_0(t),0,0,0)$ with nonnegative initial conditions is globally attractive. We obtain \ref{teo:Extinction}. in Theorem~\ref{teo:Main} and this concludes our proof.

\section{Simulation}\label{section:SIM}

In order to illustrate our results we carried out some experiments.

Note that, comparing corollaries~\ref{teo:asymp-aut},~\ref{teo:nonaut-PER},~\ref{teo:nonaut-SIM} and~\ref{teo:nonaut-S-QA} with the corresponding autonomous situations we see that the thresholds for each of these models are identical to the thresholds of an autonomous model with parameters that are average type quantities obtained from the corresponding nonautonomous parameteres. In this section we will confirm this fact for some model but we will also show that this is not always the case. Namely, we will define a set of parameters, including a nonautonomous $\beta(t)$ that depend on some $\alpha>0$.

Consider a nonautonomous model with mass-action and parameters $\Lambda=0.001$, $d=0.035$, $\gamma(t)=0.4$, $\sigma(t)=0.01$, $a_1(t)=a_2(t)=\eps(t)=0.2$ and $$\beta(t)=\alpha(1-0.7\sin(0.3 t))(2-\e^{-t}).$$

Firstly, we will confirm our results by computing the thresholds for the models with the above parameteres and mass-action and also quarantine-adjusted incidence.
For any $\lambda>0$, we obtain
\[
R^{SIM}_p(\lambda)=\liminf_{t\to\infty} \ 0.0886\alpha (\lambda-4.667\sin(.3t+.15\lambda)\sin(.15\lambda))
\]
and
\[
R^{SIM}_e(\lambda)=\limsup_{t\to\infty} \ 0.0886\alpha (\lambda-4.667\sin(.3t+.15\lambda)\sin(.15\lambda))
\]
(where the numbers $R^{SIM}_p(\lambda)$ and $R^{SIM}_e(\lambda)$ are given in section~\ref{section:GEN}). For instance, setting $\alpha=9$ and $\lambda=21$, we have $R^{SIM}_p(\lambda)=1.10599>1$ and we can conclude that the infectives are permanent and, setting $\alpha=8$ and $\lambda=21$, we have $R^{SIM}_e(\lambda)=0.98310<1$ and we can conclude that the infectives go to extinction. Figure~\ref{fig1} illustrate the referred situations.
\begin{figure}[h]
  \begin{minipage}[t][3.5cm]{.49\textwidth}
        \includegraphics[scale=0.58]{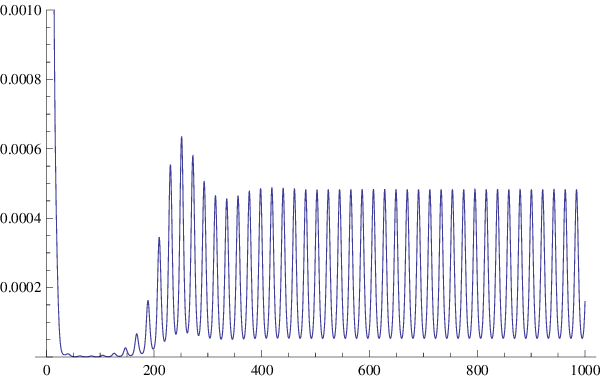}
  \end{minipage}
  \begin{minipage}[t][3.5cm]{.49\textwidth}
        \includegraphics[scale=0.58]{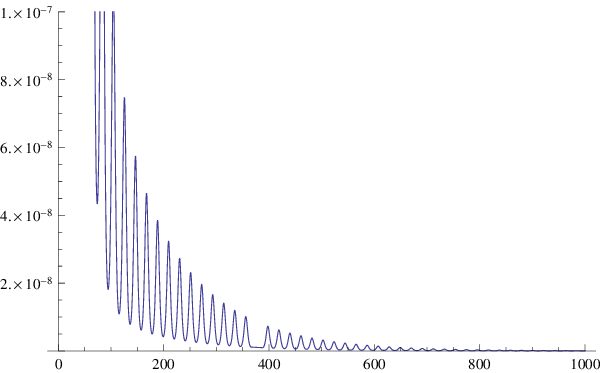}
  \end{minipage}
     \caption{Constant birth and death, mass-action - left $\alpha=9$ ($R^{SIM}_p \approx 1.11$) and right $\alpha=8$ ($R^{SIM}_e\approx 0.98$)}
     \label{fig1}
\end{figure}
Now consider a nonautonomous model with quarantine adjusted incidence and the same parameters: $\Lambda=0.001$, $d=0.035$, $\gamma(t)=0.4$, $\sigma(t)=0.01$, $a_1(t)=a_2(t)=\eps(t)=0.2$ and $\beta(t)=\alpha(1-0.7\sin(0.3 t))(2-\e^{-t})$. For any $\lambda>0$, we obtain in this case
\[
R^{S/QA}_p(\lambda) = \liminf_{t\to\infty} \ 3.10078 \alpha (\lambda-4.667\sin(.3t+.15\lambda)\sin(.15\lambda))
\]
and
\[
R^{S/QA}_e(\lambda) = \limsup_{t\to\infty} \ 3.10078 \alpha (\lambda-4.667\sin(.3t+.15\lambda)\sin(.15\lambda))
\]
(where the numbers $R^{S/Q}_p(\lambda)$ and $R^{S/Q}_p(\lambda)$ are given in section~\ref{section:GEN}). Setting $\alpha=0.25$ and $\lambda=0.2$, we have $R^{S/QA}_p(\lambda)=1.07527>1$ and we can conclude that the infectives are permanent and, setting $\alpha=.23$ and $\lambda=0.2$, we have $R^{S/QA}_e(\lambda)=0.989247<1$ and we can conclude that the infectives go to extinction. Figures~\ref{fig2} illustrate the referred situations.
\begin{figure}[h]
  \begin{minipage}[b][3.5cm]{.49\linewidth}
    \includegraphics[scale=0.58]{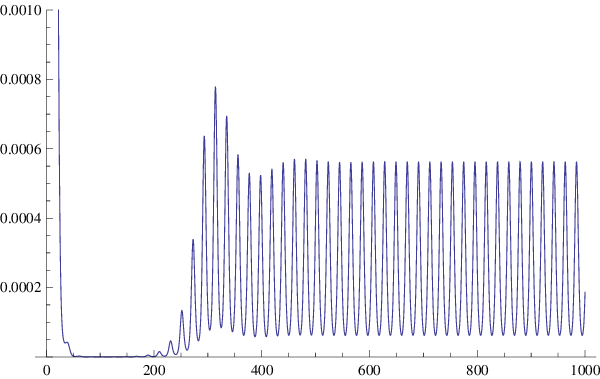}
  \end{minipage}
  \begin{minipage}[b][3.5cm]{.49\linewidth}
        \includegraphics[scale=0.58]{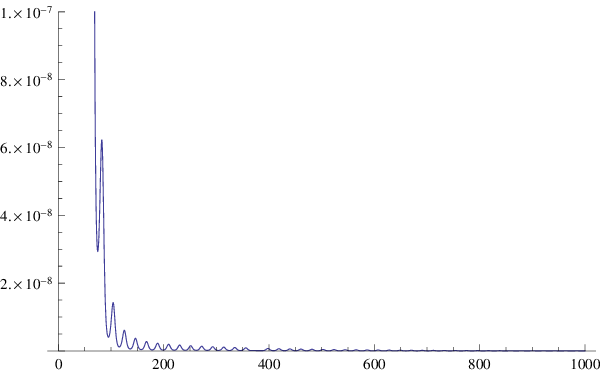}
  \end{minipage}
    \caption{Constant birth and death, quarantine-adjusted - left $\alpha=0.25$ ($R^{S/QA}_p \approx 1.07$) and right $\alpha=0.23$ ($R^{S/QA}_e\approx 0.99$)}
      \label{fig2}
      \end{figure}

Next, we will consider variable birth and death rates, namely we set
    $$\Lambda(t)=.001(1+\sin(.3t))(1-\e^{-t}) \quad \text{and} \quad d(t)=.035(1+\cos(.3t))(1+\e^{-t}).$$
Notice that, for $\lambda_0=2\pi/.3$, we have
$$\lim_{t \to +\infty} \frac{1}{\lambda_0} \int_t^{t+\lambda_0} \Lambda(s) \ds = 0.001 \quad \text{and} \quad \lim_{t \to +\infty} \frac{1}{\lambda_0} \int_t^{t+\lambda_0} d(s) \ds = 0.035$$
and so we can analyse the influence of the nonautonomy of $\Lambda(t)$ and $d(t)$ comparing our next results to the previous ones.

We start with quarantine-adjusted incidence. According to corollary~\ref{teo:nonaut-S-QA}, we have in this case the same thresholds as before. Setting again $\alpha=0.25$ and $\lambda=0.2$, we have the same threshold ($R^{S/QA}_p(\lambda)=1.07527>1$) and the infectives are permanent and setting $\alpha=.23$ and $\lambda=0.2$ we get again the same threshold ($R^{S/QA}_e(\lambda)=0.989247<1$) and we can conclude that the infectives go to extinction. Figure~\ref{fig3} corresponds to this situation.
\begin{figure}[h]
  \begin{minipage}[b][3.5cm]{.49\linewidth}
    \includegraphics[scale=0.58]{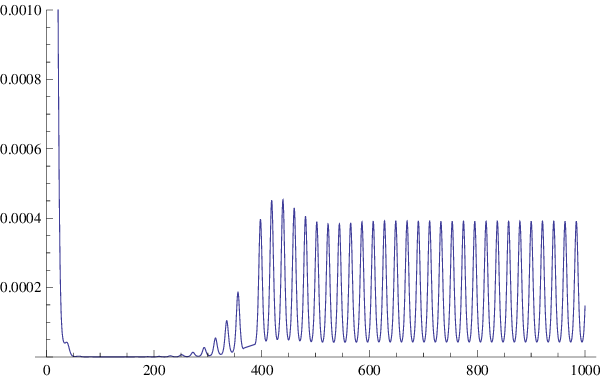}
\end{minipage}
  \begin{minipage}[b][3.5cm]{.49\linewidth}
        \includegraphics[scale=0.58]{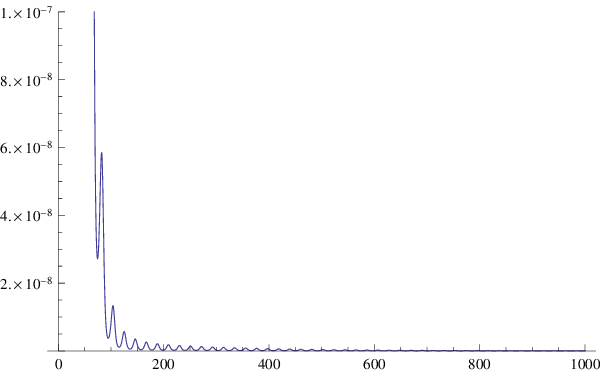}
\end{minipage}
    \caption{Variable birth and death, quarantine-adjusted - left $\alpha=0.25$ ($R^{S/QA}_p \approx 1.07$) and right $\alpha=0.23$ ($R^{S/QA}_e\approx 0.99$)}
          \label{fig3}
\end{figure}

Next we turn our attention to mass-action incidence. Corollary~\ref{teo:nonaut-SIM} doesn't give us the threshold in this case since we had to assume constant birth and death rates in section~\ref{section:GEN}. In this case we need to compute the thresholds numerically. Setting $\lambda=6\pi$ and $\alpha=7.6$ in~\eqref{eq:liminf-threshold} we get $R_p(6\pi)\approx 1.33768 > 1$ and setting $\lambda=6\pi$ and $\alpha=7.3$ in~\eqref{eq:limsup-threshold} we get $R_e(6\pi)\approx 0.900342 <1$. Thus we conclude that for $\alpha=7.6$ we have permanence and for $\alpha=7.3$ we have extinction. Figure~\ref{fig4} supports the conclusions obtained by computing the threshold.

\begin{figure}[h]
  \begin{minipage}[b][3.5cm]{.49\linewidth}
    \includegraphics[scale=0.58]{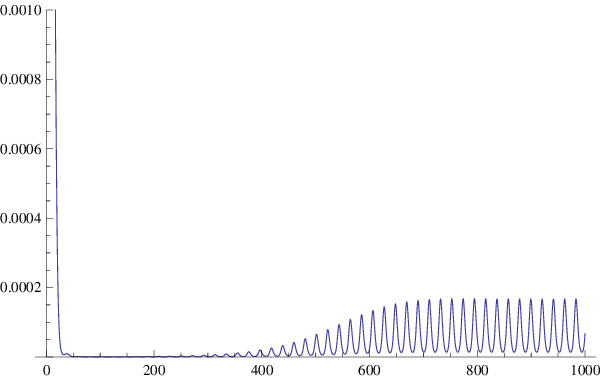}
\end{minipage}
  \begin{minipage}[b][3.5cm]{.49\linewidth}
        \includegraphics[scale=0.58]{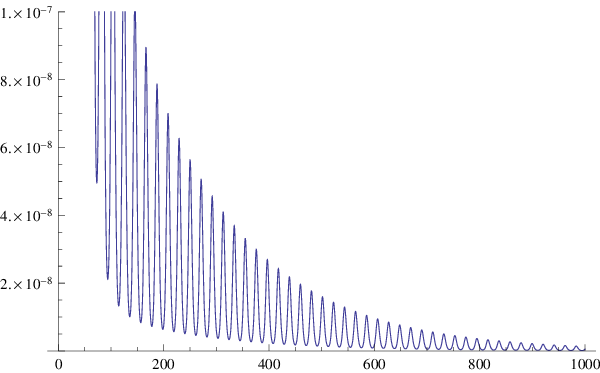}
\end{minipage}
    \caption{Variable birth and death, mass-action - left $\alpha=7.6$ ($R_p(6\pi)\approx 1.34$) and right $\alpha=7.3$ ($R_p(6\pi)\approx  0.90$)}
          \label{fig4}
\end{figure}

\section{Discussion}\label{section:D}

In this work we obtained threshold conditions for the permanence and extinction of the infectives for a nonautonomous model with isolation and general incidence. Our result includes as particular cases several models in the literature and we were able to obtain sharp thresholds based only on some kind of asymptotic averages of the parameters defined in~\eqref{eq:asymptotic-parameters}. Namely, we obtained these kind of thresholds in the autonomous case, the asymptotically autonomous case, the periodic case with constant birth and death rates, the general nonautonomous model with mass-action and constant birth and death rates and the general nonautonomous model with quarantine-adjusted incidence. In all the previous situations we can see that the thresholds obtained correspond to the thresholds for the autonomous model whose parameters are obtained by the corresponding nonautonomous parameter computing one of the numbers in~\eqref{eq:asymptotic-parameters}. For example, we have shown that the thresholds for the asymptotically autonomous model are the same as the thresholds for the corresponding autonomous model and, for the nonautonomous model with quarantine-adjusted incidence $R_p^{S/QA}(\lambda)$ the thresholds are equal to the number $R_p(\lambda)$ corresponding to the autonomous model (with arbitrary $\Lambda,d,\epsilon,\alpha_2>0$)
$$
\begin{cases}
S'=\Lambda-\beta_\lambda^- \frac{SI}{S+I+R}-d_\lambda^- S \\[2mm]
I'=\beta_\lambda^- \frac{SI}{S+I+R}-\left[\gamma_\lambda^-+\sigma_\lambda^-+d_\lambda^-+(\alpha_1)_\lambda^- \right]I \\[2mm]
Q'=\sigma_\lambda^-I -[d_\lambda^-+\alpha_2+\epsilon]Q \\[2mm]
R'=\gamma_\lambda^- I + \epsilon Q -d_\lambda^- R
\end{cases}.
$$
Our simulations showed that the described situation doesn't hold in general. In fact, in section~\ref{section:SIM}, we showed that, unlike the quarantine-adjusted case, for mass-action incidence the thresholds are strongly dependent on the shape of the functions $\beta(t)$, $\lambda(t)$ and $d(t)$ and not only on the numbers $\beta_\lambda^+, \beta_\lambda^-, \Lambda_\lambda^+, \Lambda_\lambda^- d_\lambda^+$ and $d_\lambda^-$.

Besides obtaining thresholds for the permanence and extinction of infectives, Theorem~\ref{teo:Main} also states that, if $R_e(\lambda)<1$, disease-free solutions are globally asymptotically stable. It is natural to question what can be said about the global behavior of solutions when $R_p(\lambda)>1$. This problem seems to be very difficult in this general setting and probably it can depend highly on the incidence rates. In fact, already in the autonomous setting, for $R_0>1$, the mass-action incidence and quarantine-adjusted incidence model are very different: in~\cite{Hethcote-Zhien-Shengbing-MB-2002} it was shown that for mass-action incidence, when $R_0>1$, there is a unique endemic equilibrium (that coexists with the disease-free equilibrium) and is globally asymptotically stable in the region $\{(S,I,Q,R)\in \R^4: S \ge 0 \wedge I>0 \wedge Q\ge 0 \wedge R \ge 0\}$ while for quarantine-adjusted incidence Hopf bifurcations can occur for some parameter values and the endemic equilibrium may became unstable and periodic solutions can occur.

As said in the introduction, to make the model more adapted to the case of diseases with long latent stages, a latent class should be considered. This extra class adds difficulties to the obtention of thresholds and will certainly be the object of future research.

\section{Acknowledgements}\label{section:A}

I would like to thank the anonymous referees for their helpful suggestions which allowed me to improve the paper significantly.

\bibliographystyle{elsart-num-sort}

\end{document}